\DeclareFontShape{T1}{lmr}{bx}{sc} { <-> ssub * cmr/bx/sc }{}
\newcommand{\Lg}{\mathfrak{g}}
\newcommand{\Lie}{\mathfrak{Lie}}
\newcommand{\im}{\mathop\mathrm{im}}
\newcommand{\spn}{\mathop\mathrm{span}}
\newcommand{\ov}{\overline}
\newcommand*\DA{\mathop{}\!\mathbin\Box}
\newcommand{\cp}{\ov{\partial}}
\newcommand{\e}{\mathrm{e}}
\newcommand{\del}{\partial}
\newcommand{\Aut}{\mathrm{Aut}}
\newcommand{\aut}{\mathfrak{aut}}
\newcommand{\id}{\operatorname{\text{\sf id}}}
\renewcommand{\Re}{\operatorname{Re}}
\newcommand{\Oo}{\mathcal{O}}
\newcommand{\LL}{\mathcal{L}}
\newcommand{\TT}{\mathbb{T}}
\newcommand{\al}{\alpha}
\newcommand{\be}{\beta}
\newcommand{\la}{\lambda}
\renewcommand{\phi}{\varphi}
\newcommand{\ce}{\mathcal{C}^\infty}
\newcommand{\CC}{\mathbb{C}}
\newcommand{\RR}{\mathbb{R}}
\newcommand{\Ss}{\mathbb{S}}
\newcommand{\KK}{\mathbb{K}}
\newcommand{\ZZ}{\mathbb{Z}}
\newcommand{\QQ}{\mathbb{Q}}
\newcommand{\Ka}{K\"{a}hler}
\newcounter{Mycounter}[section]
\newcounter{lemma}[section]
\newcounter{claim}[section]
      \newtheorem{clm}{Claim}
\newcounter{fact}[section]
\newcounter{sublemma}[section]
\newcounter{corollary}[section]
\newcounter{theorem}[section]
\newcounter{conjecture}[section]
\newcounter{proposition}[section]
\newcounter{definition}[section]
\newcounter{example}[section]
\newcounter{remark}[section]
\newcounter{problem}[section]
\newcounter{question}[section]
\def\eqref#1{(\ref{#1})}
\def\1{\sqrt{-1}\:}
\newcommand{\cntrct}                
{\hspace{2pt}\raisebox{1pt}{\text{$\lrcorner$}}\hspace{2pt}}
\newcommand*\rel@kern[1]{\kern#1\dimexpr\macc@kerna}
\newcommand*\widebar[1]{%
  \begingroup
  \def\mathaccent##1##2{%
    \rel@kern{0.8}%
    \overline{\rel@kern{-0.8}\macc@nucleus\rel@kern{0.2}}%
    \rel@kern{-0.2}%
  }%
  \macc@depth\@ne
  \let\math@bgroup\@empty \let\math@egroup\macc@set@skewchar
  \mathsurround\z@ \frozen@everymath{\mathgroup\macc@group\relax}%
  \macc@set@skewchar\relax
  \let\mathaccentV\macc@nested@a
  \macc@nested@a\relax111{#1}%
  \endgroup
}
\def\cleardoublepage{\clearpage\if@twoside \ifodd\c@page \else\hbox{}\thispagestyle{empty}\newpage
\if@twocolumn\hbox{}\newpage\fi\fi\fi} \makeatother
\title{LCK metrics on toric LCS manifolds}
\author{Nicolina Istrati}
\address{School of Mathematical Sciences, Tel Aviv University, Ramat Aviv, Tel Aviv 69978 Israel}
\email{nicolinai@mail.tau.ac.il}
\begin{document}
\begin{abstract}
We show a bijective correspondence between compact toric locally conformally symplectic manifolds which admit a compatible complex structure and  pairs $(C,a)$, where $C$ is a \textit{good cone} in the dual Lie algebra of the torus and $a$ is a positive real number. Moreover, we prove that any toric locally conformally K\"ahler metric on a compact manifold admits a positive potential.
\end{abstract}

\maketitle


\section{Introduction}

A locally conformally symplectic (LCS) form on a manifold $M$ is a non-degenerate two-form which, around any point of the manifold, is conformal to a locally defined symplectic form. This is equivalent to the fact that on a (usually infinite) covering of $M$, there exists a global symplectic form on which the deck group acts by strict homotheties. If additionally there exists a compatible integrable complex structure,  then the LCS form is called locally conformally \Ka\ (LCK). As such, one can think of LCS and LCK geometry as a twisted (or conformal) version of the more common symplectic and \Ka\ geometries.

In particular, it makes sense to talk about (twisted) Hamiltonian actions in this context and one can thus define toric LCS or LCK manifolds, in analogy with toric symplectic manifolds. These type of actions were introduced and motivated by Vaisman in \cite{v85}, and then considered again by Haller and Rybicki in \cite{hr}, by Gini, Ornea and Parton in \cite{gop} and by Stanciu in \cite{s} in the context of the reduction procedure for LCS/LCK manifolds. 

However, a more systematic study towards the classification of toric LCS manifolds has begun only recently. First, toric Vaisman manifolds were considered by Pilca in \cite{p16}, then compact toric LCK surfaces were classified, as complex manifolds, by Madani, Moroianu and Pilca in \cite{mmp}. In \cite{i17}, we showed that any compact toric LCK manifold admits a toric Vaisman metric. Finally, in \cite{bgp}, Belgun, Goertsches and Petrecca studied the moment map of a certain class of toric LCS manifolds and showed a corresponding convexity property.

This paper can be seen as a continuation of our previous paper on the same topic, and its goal is twofold. First, we give a classification of compact toric LCS manifolds admitting some compatible complex structure in terms of the cone over their moment map and a positive number (\ref{classifLCS}). The proof of this result has two ingredients: on one hand, we use the classification of a certain class of toric symplectic cones by the image of their moment maps. This image to which one adds a point forms a polyhedral cone with specific properties, called \textit{a good cone}. This description was achieved by Lerman \cite{l03} (see also Banyaga and Molino \cite{bm93}, \cite{bm96}, \cite{b99} and Boyer and Galicki \cite{bg00}, who had previously settled partial results in this direction). On the other hand, we derive, as a consequence of \cite{i17}, that the symplectic cover of any toric LCS manifold of LCK type admits the structure of a toric symplectic cone appearing in Lerman's classification (\ref{Vaismant}). 

One should note that not all toric LCS manifolds are of LCK type  (cf. \cite[Example~6.3]{i17}), and thus \ref{classifLCS} does not give a classification of all toric LCS manifold, as opposed to the classical Delzant classification of compact symplectic toric manifolds \cite{d}. However, all the examples of toric LCS manifolds that we are aware of arise from compact contact toric manifolds, classified also in \cite{l03}, so at this moment it is yet unclear how much bigger the class of all compact toric LCS manifolds is.

The second part is related to the \textit{LCK metrics with potential}. These are LCK metrics for which the corresponding \Ka\ metric on the cover admits a potential which is acted upon by homotheties by the deck group (see \ref{DefPot} for an equivalent definition). When the potential is strictly positive, the LCK metric is called with positive potential. This class of metrics was introduced by Ornea and Verbitsky in \cite{ov10} as a generalisation of the so called \textit{Vaisman metrics} (cf. \ref{DefVais}) and studied in subsequent papers \cite{ov12}, \cite{go14}, \cite{ovv} etc. In particular, in \cite{ov18} the authors show that a compact complex manifold with an LCK metric with potential admits an LCK metric with positive potential. Nonetheless, the question of whether any LCK metric with potential admits a positive potential has still remained open. In any case, it was observed by Vuletescu (see the introduction of \cite{ov18}) that an LCK form can have a non-positive potential, and more generally the potential need not be unique.

On the other hand, by combining \cite[Theorem~4.5]{llmp} and  \cite[Theorem~5.1]{t94}, it follows that any LCK metric on a compact Vaisman type manifold is exact, meaning that the corresponding \Ka\ metric admits a primitive on which the deck gorup acts by homotheties. It was subsequently believed that any LCK metric on such a manifold should also have a potential, but this was disproved by Goto \cite{go14}. However we still lack a good understanding of when this phenomenon occurs.

The second result of the present paper (\ref{toricPot}) states that any toric LCK metric on a compact manifold admits a positive potential. We deduce this from another result (\ref{APot}), giving sufficient conditions for an LCK metric on a compact Vaisman type manifold to admit a potential, and also for the uniqueness of specific potentials. The sufficient conditions consist in asking for a certain vector field - the anti-Lee vector field of any Vaisman metric (cf. \eqref{antiLee}) - to be an infinitesimal conformal symmetry of the metric. The questions of existence and uniqueness of the potential interpret in terms of the vanishing of certain cohomology classes. We show the desired vanishing by using Hodge theory with respect to a certain Laplacian. Finally, the positivity of the potential in the toric case easily follows as one is reduced to the simple study of convex functions on $\RR$ with an equivariance property. 

The paper is organised as follows: in Section \ref{SecDef} and \ref{SecToricLCS} we introduce the main definitions of LCS and LCK geometry, and then of toric LCS manifolds. In Section \ref{SecSymplCone} we discuss some definitions and properties of symplectic cones and then present Lerman's classification result. In Section \ref{SecClass} we prove our classification result (\ref{classifLCS}) of compact toric LCS manifolds of LCK type. Finally, in Section \ref{SecPot} we prove \ref{APot} and \ref{toricPot} regarding LCK metrics with potential.

\subsection*{Notation}
$G$ will always denote a compact $n$-dimensional torus, $\Lg$ its Lie algebra, $\Lg^*$ its dual Lie algebra and $\Lambda=\ker(\exp:\Lg\rightarrow G)$ its integral lattice. All group actions that we consider here are effective, so for $G$ acting on $M$ we will identify directly $\Lg$ with a subspace of $\ce(TM)$ and use $V$ to denote a vector field from $\Lg$. $C$ will be used to denote a cone in $\Lg^*$.  We will generally denote by $(M,\Omega)$ an LCS manifold, by $\theta$ the Lee form and  by $(\hat M,\omega)$ its minimal symplectic cover with deck group $\Gamma$. $A$ will always denote the anti-Lee vector field of $(\Omega,\theta)$, defined by $\iota_A\Omega=-\theta$, and $B=-JA$ the Lee vector field. $N$ will be used to denote a manifold which supports a symplectic cone structure, and $X$ will be used to denote a Liouville vector field. By $(S,\al)$ or by $(S,[\al])$ we will denote a contact manifold. $J$ will always denote an integrable complex structure.

The notation $\Omega^k(M,\KK)$ will be used for $\KK$-valued $k$-forms on $M$, where $\KK$ is either $\RR$ or $\CC$. When the field $\KK$ does not matter, we will sometimes also write $\Omega^k(M)$. Similarly, $\Omega^{p,q}(M,\CC)$ denotes the sheaf of $(p,q)$-forms on $(M,J)$. Finally, for a line bundle $L\rightarrow M$, $\Omega^\bullet(M)\otimes L$  will denote the sheaf of $L$-valued forms on $M$. For a vector field $U$, $\LL_U$ will denote the Lie derivative with respect to $U$ and $\iota_U$ the interior product with $U$.

\section{LCS and LCK structures}\label{SecDef}

We begin by recalling the definitions related to LCS and LCK geometry which are relevant in our context. For a more detailed account of the subject, the reader can consult \cite{do}.

Let $M$ be a connected compact manifold of real dimension $2n$, $n>0$. 

\begin{definition}
A real non-degenerate two form $\Omega$ on $M$ is called a \textit{locally conformally symplectic} (LCS) form if there exists a closed one-form $\theta$ on $M$, called the \textit{Lee form} of $\Omega$, such that:
\begin{equation}\label{defLCS}
d\Omega=\theta\wedge\Omega.
\end{equation}
If $\theta$ is not exact, then $\Omega$ is called a strict LCS form.
\end{definition}

If $\Omega$ is LCS on $M$ with Lee form $\theta$, then for any $f\in\ce(M,\RR)$ also $\e^f\Omega$ is LCS , with Lee form $\theta+df$. We denote by $[\Omega]=\{\e^f\Omega|f\in\ce(M,\RR)\}$ the corresponding conformal class, and we call $[\Omega]$ an \textit{LCS structure}. 

Denote by $\pi:\hat M\rightarrow M$ the minimal cover of $M$ on which $\pi^*\theta$ becomes exact. If we consider the period map corresponding to the de Rham class $[\theta]_{dR}$:
\begin{equation*}
\chi_{[\theta]}:\pi_1(M)\rightarrow \RR, \ \ \ \gamma\mapsto\int_\gamma\theta
\end{equation*}
then $\Gamma:=\pi_1(M)/\ker\chi_{[\theta]}$ is the deck group of $\hat M$. Let $\phi\in\ce(\hat M,\RR)$ be so that $\pi^*\theta=d\phi$. Then we have $\gamma^*\phi=\phi+\chi_{[\theta]}(\gamma)$, for any $\gamma\in\Gamma$. The form $\omega:=\e^{-\phi}\pi^*\Omega$ is a global symplectic form on $\hat M$ on which $\Gamma$ acts by strict homotheties, and $(\hat M,\omega)$ is called the minimal symplectic cover of the LCS manifold $(M,[\Omega],[\theta]_{dR})$.

Let us now suppose that there exists an integrable complex structure $J$ on $M$.

\begin{definition}
An LCS form $\Omega$ which is compatible with $J$, in the sense that $g(\cdot,\cdot):=\Omega(\cdot,J\cdot)$ is a $J$-invariant Riemannian metric on $M$, is called a \textit{locally conformally \Ka} (LCK) form (or metric). 
\end{definition}

In LCS geometry, one is naturally led to consider the following operator, called the twisted differential:
\begin{equation*}
d_\theta:\Omega^k(M)\rightarrow \Omega^{k+1}(M), \ \ \al\mapsto d\al-\theta\wedge\al.
\end{equation*}
Here $\theta$ can be any closed one-form, so that one would have $d_\theta^2=0$. The corresponding cohomology:
\begin{equation*}
H^\bullet_{\theta}(M)=\frac{\ker d_\theta}{\im d_\theta}
\end{equation*}
is called the Lichnerowicz or twisted de Rham cohomology. Equivalently, $d_\theta$ can be thought of as a flat connection $\nabla$ on the trivial real line bundle $M\times \RR$.  Such a line bundle with connection will then be denoted by $L_\theta$. If $d^\nabla$ denotes the differential operator acting on $\Omega^\bullet (M)\otimes L_\theta$ which is induced by $\nabla$ by the Leibniz rule, then again as $\nabla$ is flat, $(d^\nabla)^2=0$ and we have a natural isomorphism:
\begin{equation*}
H_\theta^\bullet(M)\cong H^\bullet(M,L_\theta):=\frac{\ker d^\nabla}{\im d^\nabla}.
\end{equation*}
Note that this cohomology only depends on the de Rham class $[\theta]_{dR}$.

If one additionally has an integrable complex structure $J$ on $M$, then one can split the twisted differential as $d_\theta=\del_\theta+\cp_\theta$, where:
 \begin{align*}
\del_\theta &:\Omega^{p,q}(M,\CC)\rightarrow \Omega^{p+1,q}(M,\CC), \ \  \del_{\theta}\al=\del\al-\theta^{1,0}\wedge\al, \\ 
\cp_\theta &:\Omega^{p,q}(M,\CC)\rightarrow \Omega^{p,q+1}(M,\CC), \  \ \cp_{\theta}\al=\cp\al-\theta^{0,1}\wedge\al.
\end{align*}
Here we let:
\begin{equation*}
\theta^{1,0}=\frac{1}{2}(\theta+iJ\theta)\in\Omega^{1,0}(M,\CC), \ \ \theta^{0,1}=\frac{1}{2}(\theta-iJ\theta)\in\Omega^{0,1}(M,\CC).
\end{equation*}
Clearly one has $\del_\theta^2=\cp_\theta^2=0$, and we will denote the twisted Dolbeault cohomology groups corresponding to $\cp_\theta$ by:
\begin{equation*}
H^{p,q}(M,L_\theta)=\frac{\ker \cp_\theta|_{\Omega^{p,q}(M,\CC)}}{\im \cp_\theta|_{\Omega^{p,q-1}(M,\CC)}}.
\end{equation*}

By definition, an LCS form verifies $d_\theta\Omega=0$, and so it induces a cohomology class $[\Omega]_{d_\theta}$ in $H^2_\theta(M)$. However this class can vanish, and this gives rise to particular kinds of LCK metrics:

\begin{definition}\label{DefPot} An LCK (or LCS) form $\Omega$ is called \textit{exact} if $\Omega=d_\theta\be$ for some $\be\in\Omega^1(M,\RR)$. 
An LCK metric $\Omega$ is called \text{with potential} if there exists $f\in\ce(M,\RR)$ so that $\Omega=2i\del_\theta\cp_\theta f$. It is called \textit{with positive potential} if $f$ can be chosen strictly positive on $M$.
\end{definition}

Note that the above definitions are conformally invariant, since we have:
\begin{align}
\e^ud_\theta\be&=d_{\theta+du}(\e^u\be), \nonumber \\
\e^u2i\del_{\theta}\cp_\theta f&=  2i\del_{\theta+du}\cp_{\theta+du}(\e^uf), \ \ \forall u,f\in\ce(M), \ \be\in\Omega^{1}(M).\label{ideldelconf}
\end{align}
Hence it make sense to say that a conformal LCK structure $[\Omega]$ is exact or with (positive) potential.

\subsection{Vaisman type manifolds}

A very special class of LCK metrics is given by the Vaisman metrics, which are particular examples of LCK metrics with positive potential. 
\begin{definition}\label{DefVais}
A strict LCK metric $\Omega$ on $(M,J)$ is called Vaisman if its Lee form $\theta$ is parallel with respect to the Levi-Civita connection corresponding to $g=\Omega(\cdot,J\cdot)$. A complex manifold $(M,J)$ is called \textit{of Vaisman type} if it admits some compatible Vaisman metric. 
\end{definition}

Note that for any LCS form $(\Omega,\theta)$, one can naturally define a vector field $A$, which we will call \textit{the anti-Lee} vector field, by:
\begin{equation}\label{antiLee}
\iota_A\Omega=-\theta.
\end{equation}
If moreover $\Omega$ is LCK with respect to $J$, then we can also define \textit{the Lee vector field} $B$ by $B=-JA$, so that $B$ is the metric dual of $\theta$.

It is not difficult to see that for a Vaisman metric, the Lee vector field $B$ is real holomorphic, Killing and of constant norm, while the Lee form is harmonic with respect to the Vaisman metric. On the other hand, if $n>1$, then up to constant multiples there exists at most one Vaisman metric in a given conformal LCK class, so this notion is not conformally invariant. We will usually normalise a Vaisman metric so that its Lee vector field is of norm $1$.

Also it is easy to check that given a Vaisman metric $(\Omega,\theta)$ with $\theta(B)=1=\Omega(B,JB)$, then because $\LL_B\Omega=0$, we have $\Omega=-dJ\theta+\theta\wedge J\theta$. This last equation is equivalent to $\Omega=2i\del_\theta\cp_\theta 1$, i.e. $\Omega$ has positive potential $f=1$.

We recall here a few properties of Vaisman type manifolds that will be used later in this paper. Fix a Vaisman type manifold $(M,J)$. Also let $B$ be the Lee vector field of some Vaisman metric on $(M,J)$.

\begin{fact}(\cite[Corollary~2.7]{t97} \label{FLeev}
The Lee vector field of any Vaisman metric on $(M,J)$ is a positive multiple of $B$. 
\end{fact}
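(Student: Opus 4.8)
The plan is to compare the ``complex Lee fields'' of the two Vaisman metrics and show that they are proportional, with a positive real constant of proportionality. Normalise the given Vaisman metric $(\Omega,\theta)$ so that $\theta(B)=g(B,B)=1$ (hence also $\theta(JB)=g(B,JB)=0$), and let $(\Omega',\theta')$ be a second Vaisman metric on $(M,J)$ with Lee field $B'$, normalised so that $\theta'(B')=1$. One has to produce a positive $c$ with $B'=cB$.

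First, complexify. As recalled above $B$ is real holomorphic, and since $J$ is integrable $JB$ is real holomorphic too (because $(\LL_{JX}J)=J\circ(\LL_{X}J)$ for every vector field $X$), and likewise for $B'$ and $JB'$; so $B-iJB$ and $B'-iJB'$ are holomorphic sections of $T^{1,0}M$, nowhere vanishing since $B$, $B'$ have constant positive length. The key input is then that the \emph{canonical (Lee) foliation} does not depend on the Vaisman metric: the $J$-invariant rank-two distribution $\mathcal F:=\spn(B,JB)$ — integrable, being spanned by the commuting fields $B,JB$, and equal to the $2$-dimensional kernel of the closed $(1,1)$-form $\Omega-\theta\wedge J\theta=-dJ\theta$ of rank $2n-2$ — coincides with $\spn(B',JB')$. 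Granting this, $B-iJB$ and $B'-iJB'$ are two nowhere-vanishing holomorphic sections of the holomorphic line bundle $T^{1,0}\mathcal F\subset T^{1,0}M$, so their quotient is a nowhere-vanishing holomorphic function on the compact manifold $M$, hence a constant $c=a+ib\in\CC^{*}$; equivalently $B'=aB+bJB$ and $JB'=aJB-bB$.

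It remains to see that $c$ is a positive real. One checks directly that $\LL_{JB}\Omega=0$ (since $\iota_{JB}\Omega=-\theta$ is closed and $\iota_{JB}(\theta\wedge\Omega)=\theta(JB)\Omega-\theta\wedge\iota_{JB}\Omega=0$) and, from the Vaisman identity $\Omega'=-dJ\theta'+\theta'\wedge J\theta'$, that $\LL_{B'}\Omega'=0$. Pass to a connected covering $\varpi\colon\widehat M\to M$ on which $\varpi^{*}\theta=d\phi$ and $\varpi^{*}\theta'=d\phi'$, and lift the fields. Since $\widehat{JB}(\phi)=\theta(JB)=0$, every orbit of $\widehat{JB}$ lies in a level set of $\phi$; these level sets are the lifted leaves of $\ker\theta$, which are compact, so $\e^{-\phi'}$ is bounded along each $\widehat{JB}$-orbit. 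On the other hand $B'=aB+bJB$ together with $\theta'(JB')=0$ and $\theta'(B')=1$ gives $\theta'(JB)=b/(a^{2}+b^{2})$, hence $\widehat{JB}(\e^{-\phi'})=-\tfrac{b}{a^{2}+b^{2}}\e^{-\phi'}$, so $\e^{-\phi'}$ grows exponentially along $\widehat{JB}$-orbits unless $b=0$. Thus $b=0$, i.e.\ $B'=aB$ with $a\neq 0$. Finally, a short computation on the Kähler cover rules out $a<0$: writing $\omega=\e^{-\phi}\varpi^{*}\Omega$ and $\omega'=\e^{-\phi'}\varpi^{*}\Omega'$ one gets $\LL_{\widehat B}\omega=-\omega$ and $\LL_{\widehat{B'}}\omega'=-\omega'$, so if $a<0$ then $-\widehat B$ and $-\widehat{B'}=-a\widehat B$ would be Liouville fields, pointing in opposite directions, of two Kähler cone structures on $(\widehat M,J)$; but the Euler direction of a Kähler cone is determined by $J$ (it is the one for which the induced contact and transverse Kähler data have the correct positivity — visible on $M$ as $\iota_{JB}(J\theta)=\theta(B)=1>0$), so $a>0$. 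Hence $c=a>0$.

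The genuinely hard step is the one assumed here, namely that $\spn(B,JB)$ is independent of the choice of Vaisman metric; the rest is essentially formal. The one further external ingredient, used in the last paragraph, is the compactness of the leaves of $\ker\theta$, which comes from the structure theory of compact Vaisman manifolds — their minimal Kähler cover is a metric cone over a compact Sasaki manifold, the leaves being the links of the cone.
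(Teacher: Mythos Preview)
The paper does not supply its own proof of this fact; it is simply quoted from Tsukada \cite[Corollary~2.7]{t97}. So there is no argument in the paper to compare against, and your proposal must stand on its own.

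It does not. You already flag the main gap: the invariance of the canonical foliation $\spn(B,JB)$ under change of Vaisman metric is assumed, not proved, and this is essentially the content of the result --- once one knows the two holomorphic line fields $T^{1,0}\mathcal F$ coincide, the rest is bookkeeping. So as a proof of the stated fact, the proposal is largely circular.

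Even granting that assumption, the argument for $b=0$ rests on a claim that is false in general. You assert, as an ``external ingredient'', that the level sets of $\phi$ on $\hat M$ --- equivalently the Sasaki link $S$ in $\hat M\cong S\times\RR$ --- are compact. The paper itself (the remark in Section~\ref{SecSymplCone} citing \cite{v79} and \cite{gopp}) states that the K\"ahler cone over a compact Vaisman manifold is of compact type \emph{if and only if} $[\theta]_{dR}$ is proportional to a rational class; for irrational Lee class the leaves of $\ker\theta$ are non-compact, and your boundedness argument for $\e^{-\phi'}$ along $\widehat{JB}$-orbits collapses. A repair is conceivable --- once $JB\in\spn(B',JB')$, the field $JB$ is Killing for $g'$ as well, so one can argue via the compact closure of its flow in $\Isom(g')$ on $M$ rather than via level sets on $\hat M$ --- but as written the step is wrong.

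Finally, the justification for $a>0$ is too sketchy. ``The Euler direction of a K\"ahler cone is determined by $J$'' needs an actual argument; the identity $\iota_{JB}(J\theta)=\theta(B)=1$ is quadratic in the Lee data and does not by itself distinguish $B$ from $-B$.
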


\begin{fact}(\textnormal{Proof of} \cite[Theorem~5.1]{t94})\label{FLeef}
Given any LCK form with Lee form $\theta$ on $(M,J)$, there exists $\theta_0\in[\theta]_{dR}$ so that $\Omega_0:=2i\del_{\theta_0}\cp_{\theta_0}1$ is Vaisman on $(M,J)$.
\end{fact}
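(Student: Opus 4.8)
The plan is to reduce the statement to a property of the de Rham class $[\theta]_{dR}$ alone, and then to realise that class by an explicit Vaisman metric. We may assume $n\ge2$.

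\textit{Reduction.} Recall from the discussion preceding \ref{FLeev} that an LCK metric $\Omega_0$, with Lee form $\theta_0$ and Lee field $B_0=-JA_0$, satisfies $\Omega_0=-dJ\theta_0+\theta_0\wedge J\theta_0=2i\del_{\theta_0}\cp_{\theta_0}1$ as soon as $\LL_{B_0}\Omega_0=0$ and $\theta_0(B_0)=\Omega_0(B_0,JB_0)=1$, and that such an $\Omega_0$ is Vaisman exactly when $\theta_0$ is parallel for $g_0=\Omega_0(\cdot,J\cdot)$. In particular every Vaisman metric on $(M,J)$, once normalised so that its Lee field has norm $1$, is literally of the form $2i\del_{\theta_0}\cp_{\theta_0}1$. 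Hence it suffices to exhibit, inside $[\theta]_{dR}$, the Lee form of some Vaisman metric on $(M,J)$.

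\textit{The Lee class of a compatible LCK metric.} Fix a Vaisman metric on $(M,J)$ with Lee form $\theta_V$ and Lee field $B$, normalised so that $|B|=1$. The key input, which is what one extracts from the proof of \cite[Theorem~5.1]{t94}, is that on a compact manifold of Vaisman type the de Rham class of the Lee form of any compatible LCK metric is a \emph{positive} multiple of $[\theta_V]_{dR}$. I would argue for it as follows: $B$ and $JB$ are commuting holomorphic Killing fields, so the closure in $\Aut(M,J)$ of the flow they generate is a torus $T$ acting holomorphically on $M$, with $B,JB$ in its Lie algebra; for $\psi\in T$ the form $\psi^*\theta$ is cohomologous to $\theta$, say $\psi^*\theta-\theta=df_\psi$ with $f_\psi$ smooth in $\psi$ and additive along $T$, so the LCK metrics $\e^{-f_\psi}\psi^*\Omega$ all have Lee form $\theta$ and their $T$-average $\bar\Omega$ is a $T$-invariant LCK metric with Lee form $\theta$; analysing $\bar\Omega$ along and transverse to the canonical foliation $\spn(B,JB)$, together with \ref{FLeev}, pins $[\theta]_{dR}$ onto the ray $\RR_{>0}[\theta_V]_{dR}$. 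Writing $[\theta]_{dR}=\la[\theta_V]_{dR}$ with $\la>0$, it remains to realise $\la[\theta_V]_{dR}$ by a Vaisman metric.

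\textit{Realising $\la[\theta_V]_{dR}$.} Set $\Omega_\la:=2i\del_{\la\theta_V}\cp_{\la\theta_V}1=-\la\,dJ\theta_V+\la^2\,\theta_V\wedge J\theta_V=\la\,\Omega_V+\la(\la-1)\,\theta_V\wedge J\theta_V$, where $\Omega_V=-dJ\theta_V+\theta_V\wedge J\theta_V$ is the fixed Vaisman metric. Using $|B|=1$ one gets $(\theta_V\wedge J\theta_V)(X,JX)=g_V(B,X)^2+g_V(JB,X)^2\in[0,g_V(X,X)]$, hence $\Omega_\la(X,JX)\ge\min(\la,\la^2)\,g_V(X,X)>0$ for $X\neq0$; so $\Omega_\la$ is an LCK metric with Lee form $\la\theta_V$. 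On the minimal symplectic cover of $M$ — which is the same for $\la[\theta_V]_{dR}$ as for $[\theta_V]_{dR}$ — the K\"ahler form of $\Omega_\la$ has K\"ahler potential $\rho_V^{\la}$, where $\rho_V$ is the K\"ahler potential of $\Omega_V$ there, itself the potential of a metric cone since $\Omega_V$ is Vaisman; a short computation shows $2i\del\cp(\rho_V^{\la})$ is again a K\"ahler cone metric, on which the deck group still acts by strict homotheties, so $\Omega_\la$ is Vaisman. Since $\la\theta_V\in[\theta]_{dR}$ by the previous paragraph, $\theta_0:=\la\theta_V$ is the sought Lee form.

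\textit{Main obstacle.} The only substantial step is the input quoted from \cite[Theorem~5.1]{t94}: that the Lee class of an arbitrary compatible LCK metric on a compact Vaisman type manifold points along the Vaisman Lee class. The averaging reduces it to the $T$-invariant case, but deducing from $T$-invariance that $[\theta]_{dR}$ lands on the ray $\RR_{>0}[\theta_V]_{dR}$ genuinely uses the structure theory of the canonical foliation of a compact Vaisman manifold — this is where the compactness of $M$ is essential. The reduction and the explicit construction of $\Omega_\la$ are routine by comparison.
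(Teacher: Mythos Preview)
The paper does not prove this statement; it is recorded as a \emph{Fact} and attributed to the proof of \cite[Theorem~5.1]{t94}. So there is no argument in the paper to compare your proposal against. I will therefore assess the proposal on its own.

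Your three-step structure --- (1) any normalised Vaisman metric is literally $2i\del_{\theta_0}\cp_{\theta_0}1$, (2) the Lee class of an arbitrary compatible LCK metric lies on the ray $\RR_{>0}[\theta_V]_{dR}$, (3) realise $\la[\theta_V]_{dR}$ by an explicit Vaisman metric --- is correct, and Steps (1) and (3) are fine. For Step (3), your K\"ahler-cone computation is unnecessary: since $g_\la=\la g_V+\la(\la-1)(\theta_V\otimes\theta_V+J\theta_V\otimes J\theta_V)$ and $B$ preserves $g_V$, $\theta_V$ and $J\theta_V$, the Lee field of $\Omega_\la$ (which you compute to be $\la^{-1}B$) is Killing for $g_\la$; together with $d(\la\theta_V)=0$ this is exactly the parallelism of the Lee form, so $\Omega_\la$ is Vaisman.

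The genuine gap is in Step (2). Two points. First, the cocycle $f_\psi$ is not additive in $\psi$, so your averaged form $\bar\Omega$ is only $T$-\emph{conformally} invariant; this is easily repaired by first averaging $\theta$ to a $T$-invariant representative and then averaging $\Omega$. Second, and more seriously, your sentence ``analysing $\bar\Omega$ along and transverse to the canonical foliation, together with \ref{FLeev}, pins $[\theta]_{dR}$ onto $\RR_{>0}[\theta_V]_{dR}$'' does not constitute an argument. \ref{FLeev} concerns Lee fields of \emph{Vaisman} metrics, whereas your $\bar\Omega$ is merely a $T$-invariant LCK metric, so the fact does not apply. What one actually needs here is precisely Tsukada's analysis in \cite{t94}: roughly, that the transverse K\"ahler structure of the canonical foliation forces the basic component of $\theta$ to be exact, leaving $[\theta]_{dR}$ proportional to $[\theta_V]_{dR}$, with positivity coming from the positivity of the LCK form in the $B$-direction. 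You correctly flag this as the main obstacle, but as written the proof is incomplete at exactly this point --- which is not surprising, since this \emph{is} the content one is citing from \cite{t94}.
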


\begin{fact}(\cite[Theorem~4.5]{llmp} \textnormal{together with}  \cite[Theorem~5.1]{t94})\label{Fexact} For any $0\neq[\theta]_{dR}$ in $H^1(M,\RR)$ we have $H^\bullet_\theta(M)=0$. In particular, any LCK metric on $(M,J)$ is exact.
\end{fact}

\section{Toric LCS manifolds}\label{SecToricLCS}

\begin{definition}
A $2n$-dimensional LCS manifold $(M,[\Omega],[\theta]_{dR})$ endowed with an effective action of an $n$-dimensional torus $G=\TT^n$ is called \textit{a toric LCS manifold} if every vector field in the Lie algebra of the torus $\Lg=\Lie(G)\subset\ce(TM)$ is twisted Hamiltonian with respect to $[\Omega]$. This means that if we take any $\Omega\in[\Omega]$ with Lee form $\theta$ and for any $V\in\Lg$, there exists a function $\mu^\Omega_V\in\ce(M)$ so that:
\begin{equation}\label{Ham}
\iota_V\Omega=d_\theta \mu^\Omega_V.
\end{equation}
\end{definition}

For any other conformal LCS form $\Omega'=\e^f\Omega$ with Lee form $\theta'=\theta+df$, one has:
\begin{equation*}
\iota_V\Omega'=d_{\theta'}(\e^f \mu^\Omega_V)
\end{equation*}
so that the notion of a twisted Hamiltonian action is indeed conformally invariant. Moreover, if the LCS structure is strict, meaning that $[\theta]_{dR}\neq 0$ in $H^1(M,\RR)$, then $H^0_{\theta}(M)=0$ (cf. \cite[Proposition~2.1]{v85}) and thus the Hamiltonian $\mu^\Omega_X$ corresponding to $\Omega$ is uniquely defined by \eqref{Ham}. In particular, for every $\Omega \in[\Omega]$ we have a well determined \textit{moment map} $\mu^\Omega : M \rightarrow \Lg^*$ given by:
\begin{equation}
\langle\mu^\Omega,V\rangle=\mu^\Omega_V, \ \ \ V\in\Lg.
\end{equation}
Under conformal changes of the LCS form, it transforms as
\begin{equation}\label{confmu} 
\mu^{\e^f\Omega}=\e^f\mu^\Omega.
\end{equation}

One can show (cf. \cite[Corollary 4.5]{i17}) that if $(M,[\Omega],G)$ is a toric LCS manifold, then the action of $G$ lifts to the minimal cover, so that $(\hat M, \omega, G)$ becomes a toric symplectic manifold. If $(\Omega,\theta)$ is an LCS representative with moment map $\mu_\Omega$ and $\pi^*\theta=d\phi$ on $\hat M$, then the moment map of the symplectic cover is given by $\hat\mu=\e^{-\phi}\pi^*\mu_{\Omega}$. 

Note that, by \eqref{confmu}, the image of the moment map of an LCS form $\Omega $ is not invariant under conformal changes of the form. However, the cone over it
\begin{equation*} 
C:=\RR_{\geq 0}\cdot\mu_{\Omega}(M)=\{t\mu_\Omega(x)|t\geq 0, x\in M\}=\RR_{\geq 0}\cdot\hat\mu(\hat M)\subset\Lg^*\end{equation*}
is an invariant of the conformal class $[\Omega]$, and will be called \textit{the moment cone of the LCS structure}.

\begin{definition}
We say that a compact toric LCS manifold $(M, [\Omega], [\theta]_{dR}, G)$ with $[\theta]_{dR}\neq 0$ is \textit{good} if there exists an integrable complex structure $J$ on $M$ which is compatible with all the data. More precisely, this means that $([\Omega], J)$ is an LCK structure on $M$ and that $G$ acts by biholomorphisms with respect to $J$, so that $(M,J,[\Omega],[\theta]_{dR},G)$ is a toric LCK manifold.
\end{definition}

\begin{remark}
This definition is given in analogy with Lerman's definition \cite{l03} of \textit{good cones} (cf. \ref{goodC}). As will be apparent later (cf. \ref{Vaismant}), there exists an actual correspondence between the two classes of \textit{good} objects. 
\end{remark}

\begin{remark}
When $[\theta]_{dR}=0$, it follows a posteriori from the Delzant construction that $(M,[\Omega], G)$ admits a compatible complex structure. However, this no longer holds in the strict LCS case, as is shown by \cite[Example~6.3]{i17}. 
\end{remark}

\section{Toric symplectic cones }\label{SecSymplCone}

In this section we give a brief presentation of the toric symplectic cones. After giving the main definitions, we recall the combinatorial classification of a certain subclass of these manifolds, called \textit{good toric symplectic cones},  started by Banyaga-Molino, Boyer-Galicki and achieved by Lerman. This subclass is precisely the one we have to deal with in order to classify good toric LCS manifolds. For details about this section, one can check \cite{l03} or \cite{l03b}.

\begin{definition}
A \textit{symplectic cone} is a connected symplectic manifold $(N,\omega)$ endowed with a vector field $X\in\ce(TN)$, called \textit{the Liouville vector field}, which generates a proper $\RR$-action $(\rho_t)_t\subset\Aut(N)$ by contractions of $\omega$: 
\begin{equation}\label{dil}
\LL_X\omega=-\omega, \ \ \text{ or equivalently  \ } \rho_t^*\omega=\e^{-t}\omega, \ \ t\in\RR.
\end{equation}
We denote by $\la=-\iota_X\omega$ the \textit{Liouville form}, so that $\omega=d\la$.
\end{definition}

Note that \eqref{dil} implies that the $\RR$-action is effective on $N$. As the action is moreover proper, and as $\RR$ has no non-trivial compact subgroup which would constitute the eventual stabiliser of some point, the action is then free and one has a smooth quotient $S=N/\RR$. If $S$ is compact, then $(N,\omega, X)$ is called \textit{of compact type}.

\begin{remark}\label{contact}
Given a symplectic cone $(N,\omega, X)$ with $S=N/\RR$, the natural projection $p:N\rightarrow S$ is an $\RR$-principal bundle and $S$ is naturally endowed with a co-oriented contact structure given by the uniquely defined conformal class: \begin{equation}\label{contactstr}
[\al]=\{\al\in\ce(T^*S)|\ p^*\al=\e^f\la, \ f\in\ce(N)\}.
\end{equation}
Conversely, the symplectisation of any co-oriented contact manifold has a natural structure of a symplectic cone (see for instance \cite[Chapter~2]{l03b}). 
\end{remark}

\begin{remark}\label{trivialisation}
As $\RR$ is contractible, the principal bundle $p$ is trivial. Each choice of a contact form $\al\in[\al]$ corresponds to a trivialisation $F_\al:S\times \RR\rightarrow N$. Indeed, $\al$ determines a smooth function $f:N\rightarrow \RR$ such that $p^*\al=\e^f\la$. It can easily be seen that $f$ is $\RR$-equivariant, where $\RR$ acts on the co-domain of $f$ by translations. Then $S_N:=f^{-1}(0)\subset N$ is a slice of the $\RR$-action on $N$, $p|_{S_N}:S_N\cong S$ and one has
\begin{equation*}
F_\al(y,t)=\rho_{t}(p|_{S_N}^{-1}(y)), \ \ \ (y,t)\in S\times\RR.
\end{equation*}
Conversely, a trivialisation $F:S\times\RR\rightarrow N$ defines a contact form $\al\in[\al]$ by $\al:=\e^tF^*\la$. We then have $F^*X=\frac{\del}{\del t}$ and $F^*\omega=d(\e^{-t}F^*\al)$. 
\end{remark}

Given a symplectic cone $(N,\omega, X)$, we say that a complex structure $J$ on $N$ is \textit{compatible} if $(\omega, J)$ is a \Ka\ structure and $\LL_X J=0$. In this case, $(N,J,\omega,X)$ is called a \textit{\Ka\ cone}. A compatible complex structure $J$ determines a natural trivialisation of the principal bundle $p$. Indeed, $X$ has no zeroes as we already noted, hence we have a positive function $\e^{-f}:=\omega(X,JX)$. As both $X$ and $JX$ are $\RR$-invariant, it is clear that the function $f:N\rightarrow \RR$ is equivariant, and thus defines a trivialisation $F:S\times\RR\rightarrow N$ just as in \ref{trivialisation}. 

\begin{remark} Let $(M,J,\Omega,\theta)$ be a compact Vaisman manifold with Lee vector field $B$. Let $\pi:\hat M\rightarrow M$ be its minimal cover and let $\pi^*\theta=d\phi$. Then $(\hat M, J, \omega=\e^{-\phi}\pi^*\Omega, B)$ is a \Ka\ cone. (cf. \cite{v79} and \cite[Theorem~4.2]{gopp}) It is of compact type if and only if $[\theta]_{dR}$ is a multiple of an element from $H^1(M,\QQ)\subset H^1(M,\RR)$. Conversely, given a \Ka\ cone $(N=S\times \RR, J,\omega, X=\frac{\del}{\del t})$ together with a discrete group $\Gamma$ acting freely, properly and holomorphically on $(N,J)$, acting by homotheties on $\omega$ and preserving $X$, the quotient $(N/\Gamma, J, \e^{t}\omega, dt)$ is a Vaisman manifold with Lee vector field $X=B$ (cf. \cite[Proposition~7.3]{gop}). 
\end{remark}

\begin{definition}
A \textit{toric symplectic cone} is a $2n$-dimensional symplectic cone $(N,\omega, X)$ endowed with an effective symplectic action of a torus $G=\TT^n$ which preserves the Liouville field $X$, and with a \emph{moment map} $\mu:N\rightarrow \Lg^*$ verifying the equivariance condition:
\begin{equation*}
\LL_X\mu=-\mu.
\end{equation*}
The set $C=\mu(N)\cup\{0\}\subset\Lg^*$ is called \textit{the moment cone} of $(N,\omega, X,\mu)$. If $(N,\omega,X,\mu)$ is moreover of compact type and admits a $G$-invariant compatible complex structure, then it is called a \textit{good} toric symplectic cone. 
\end{definition}

\begin{remark}\label{contactToric}
Given a toric symplectic cone $(N,\omega, X,G)$, there exists a natural effective action of $G$ on $S=N/\RR$ which makes the projection $p:N\rightarrow S$ $G$-equivariant. It is easy to check that $G$ preserves the contact structure $[\al]$ defined by \eqref{contactstr}, since it preserves $\omega$ when acting on $N$.
\end{remark}

Good symplectic cones are classified by their moment cones, which are polyhedral cones in $\Lg^*$ with certain combinatorial properties. Let $G=\Lg/\Lambda$ be a compact $n$-dimensional torus with Lie algebra $\Lg$ and integral lattice $\Lambda=\ker(\exp:\Lg\rightarrow G)$. We denote by $\Lg^*$ the dual Lie algebra. 

\begin{definition}(\cite{l03})\label{goodC}
Let $C\subset \Lg^*$ be a \textit{rational polyhedral cone}. This means that there exists a minimal a set of primitive vectors $\nu_1,\ldots,  \nu_d\in\Lambda$, $d\geq n$, defining $C$:
\begin{equation}\label{defC}
C=\{ l\in\Lg^*| \langle l,\nu_j\rangle \geq 0, j=1,d\}.
\end{equation}
A subset of the form
\begin{equation*}
F_j=C\cap\{l\in\Lg^*| \langle l, \nu_j\rangle=0\}\subset C
\end{equation*}
is called a facet of $C$ and $\nu_j$ is its defining normal. 

The cone $C$ is called \textit{a good cone} if it has non-empty interior and if every $k$-codimensional face of $C$, $0<k<n$, is the intersection of exactly $k$ facets whose defining normals can be completed to a $\ZZ$-basis of $\Lambda$. This formalizes as follows: for every face $F$ of $C$
\begin{equation*}
F=C\cap \{ l\in\Lg^* |\langle l, \nu_{j_1}\rangle=0, \ldots, \langle l, \nu_{j_k}\rangle=0 \}\neq\emptyset
\end{equation*}
the corresponding annihilator $\Lg^\circ_F:=\spn_{\RR}\{\nu_{j_1},\ldots, \nu_{j_k}\}\subset \Lg$ is of dimension $k$ and 
\begin{equation*}
\Lambda_F:=\Lg^\circ_F\cap\Lambda=\textstyle{\spn_{\ZZ}} \{ \nu_{j_1},\ldots, \nu_{j_k}\}
\end{equation*}
is a $k$-dimensional lattice. Thus, each $k$-codimensional face $F$ gives rise to a $k$-dimensional subtorus $G_F:=\Lg_F^\circ/\Lambda_F\subset G$.
\end{definition}

\begin{theorem}\textnormal{(Banyaga-Molino, Boyer-Galicki, Lerman)}\label{classif}
A toric symplectic cone of compact type is good if and only if its moment cone is good. Moreover, for each good cone $C\subset\Lg^*$ there exists a unique good symplectic cone $(N_C,\omega_C,X_C,\mu_C)$ with moment cone $C$. 
\end{theorem}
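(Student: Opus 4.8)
The statement to be proven (Theorem \ref{classif}) is Lerman's classification: a toric symplectic cone of compact type is good iff its moment cone is good, and every good cone arises from a unique good symplectic cone. Since the paper explicitly attributes this to Banyaga-Molino, Boyer-Galicki and Lerman, my plan would be to recall the proof along Lerman's lines, which proceeds through the associated contact picture. The plan is to pass from the symplectic cone $(N,\omega,X,\mu)$ to the co-oriented contact manifold $(S,[\alpha])$ with its induced $G$-action (\ref{contactToric}), use the Boothby--Wang/contact moment map correspondence to translate ``good symplectic cone'' into ``compact toric contact manifold of Reeb type'', and then invoke the combinatorial classification of compact toric contact manifolds.

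\textbf{Step 1: Reduction to the contact setting.} First I would observe that a choice of $G$-invariant compatible complex structure $J$ on $N$ (which exists by goodness) determines, via $e^{-f}=\omega(X,JX)$, a $G$-equivariant trivialisation $F:S\times\RR\to N$ as in \ref{trivialisation}, under which $X=\partial/\partial t$ and $\omega=d(e^{-t}F^*\alpha)$ for a $G$-invariant contact form $\alpha$ on $S$. Compact type means $S$ is compact, so $(S,\alpha,G)$ is a compact toric contact manifold. The moment map $\mu:N\to\Lg^*$ with $\LL_X\mu=-\mu$ restricts, via $F$, to the contact moment map $\Psi:S\to\Lg^*$, $\langle\Psi(y),V\rangle=\alpha(V_y)$, and the moment cone $C=\mu(N)\cup\{0\}$ is exactly $\RR_{\geq0}\cdot\Psi(S)\cup\{0\}$. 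The key point is that the existence of a compatible $J$ on the cone forces $X$ (hence the Reeb field of $\alpha$) to lie in the interior of the cone spanned by $G$, i.e.\ the contact structure is of \emph{Reeb type} (toric contact manifolds whose Reeb field can be taken inside $\Lg$). This reduction is reversible: a compact toric contact manifold of Reeb type, with its symplectisation and the Liouville field, yields a good toric symplectic cone.

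\textbf{Step 2: The combinatorial classification of toric contact manifolds of Reeb type.} Here I would cite Lerman's classification \cite{l03} of compact connected toric contact manifolds. For those of Reeb type, the classifying invariant is precisely the moment cone $C=\RR_{\geq0}\cdot\Psi(S)\cup\{0\}\subset\Lg^*$, and the image turns out to be a \emph{good cone} in the sense of \ref{goodC} (the stabiliser computations along faces, which force the Delzant-type condition that normals to $k$-codimensional faces extend to a $\ZZ$-basis, come from the local normal form for the torus action near points where the stabiliser is nontrivial). Conversely, given an abstract good cone $C$, the Delzant-type construction — a symplectic (here, contact) reduction of a standard model $\CC^d\setminus\{0\}$ (or $(\CC^d\setminus\{0\})/\RR_{>0}$) by the subtorus of $\TT^d$ determined by the kernel of the map $\Lambda^d\to\Lambda$ sending the $j$-th generator to $\nu_j$ — produces a compact toric contact manifold of Reeb type with moment cone $C$, equipped with an invariant compatible complex structure inherited from $\CC^d$. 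Taking the symplectisation gives the good symplectic cone $(N_C,\omega_C,X_C,\mu_C)$.

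\textbf{Step 3: Uniqueness.} Finally, uniqueness follows from the equivariant Darboux/normal-form theory: two good symplectic cones with the same moment cone $C$ have $G$-equivariantly symplectomorphic contact boundaries (this is the heart of Lerman's argument — one builds the equivariant isomorphism face-by-face over the cone using the local models and a partition-of-unity / Moser-type gluing), and the symplectomorphism of boundaries intertwining the contact forms extends canonically to the symplectisations intertwining the Liouville fields and moment maps.

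\textbf{Main obstacle.} The genuinely hard part is Step 3 together with the surjectivity half of Step 2 (that every good cone is realised): both rest on Lerman's careful equivariant local-to-global argument, where the goodness hypothesis on the cone is exactly what is needed so that the local toric models along all faces are compatible and can be patched. Since this is an established theorem quoted from the literature, in the paper itself I would not reproduce this argument but rather state Theorem \ref{classif} with the attribution, referring the reader to \cite{l03} (and \cite{bm93, bm96, b99, bg00} for the antecedents) for the full proof, and use it as a black box in Section \ref{SecClass}.
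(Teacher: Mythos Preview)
Your proposal is correct: the paper does not prove Theorem \ref{classif} at all but states it with attribution and uses it as a black box, exactly as you anticipate in your final paragraph. Your sketch of the underlying argument (passing to the compact toric contact quotient, identifying goodness of the cone with Reeb type via the local normal form, and the Delzant-type reduction from $\CC^d$ for existence and uniqueness) accurately reflects Lerman's proof in \cite{l03}, so there is nothing to correct.
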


\section{Classification of toric LCS manifolds of LCK type}\label{SecClass}

In this section we intend to give a combinatorial classification of compact toric LCS manifolds which admit a compatible complex structure, in the spirit of \ref{classif}. In order to do so, we will first need to recall, without proofs, the main steps of the construction of toric Vaisman metrics given in \cite{i17}. Next, we derive further consequences of this construction, until we are finally able to use \ref{classif} in order to show our result.

The exact statement that we will prove is the following:

\begin{theorem}\label{classifLCS}
Let $G$ be a compact torus. There exists a one-to-one correspondence between: 
\begin{enumerate}[(a)]
\item good toric LCS $G$-manifolds up to $G$-equivariant conformal automorphisms
\item pairs $(C,a)$, where $C\subset\Lg^*$ is a good cone and $a\in\RR_{>0}$. 
\end{enumerate}
\end{theorem}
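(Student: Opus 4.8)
The plan is to build the correspondence by relating good toric LCS $G$-manifolds to good toric symplectic cones, and then invoking Theorem \ref{classif}. Given a good toric LCS $G$-manifold $(M,[\Omega],[\theta]_{dR},G)$, one first applies \ref{i17} (the result quoted from our previous paper) to obtain that $M$ carries a toric Vaisman metric $\Omega_V$ in the conformal class, with Lee form $\theta_V$ and Lee vector field $B$. Passing to the minimal symplectic cover $\pi:\hat M\to M$ with $\pi^*\theta_V=d\phi$, the Remark in Section \ref{SecSymplCone} gives that $(\hat M, J, \omega=\e^{-\phi}\pi^*\Omega_V, B)$ is a \Ka\ cone, and by \cite[Corollary~4.5]{i17} the torus action lifts, making it a toric \Ka\ cone; since $M$ is compact and the Vaisman class is, up to scale, rational (this is exactly where the \emph{compact type} hypothesis of \ref{classif} gets verified — I expect this to follow because the cover of a compact manifold has compact $S=\hat M/\RR$, indeed $S$ fibers over $M$), it is a \emph{good} toric symplectic cone. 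Its moment cone $C=\RR_{\geq 0}\cdot\hat\mu(\hat M)\cup\{0\}$ is, by \ref{classif}, a good cone, and it is an invariant of $[\Omega]$ by the discussion following \eqref{confmu}. The positive number $a$ is extracted from the deck transformation: $\Gamma\cong\ZZ$ acts on $\hat M$ by a homothety of ratio $\e^{-c}$ for a single generator, equivalently $\chi_{[\theta_V]}$ has image $c\ZZ$ for some $c>0$; set $a:=c$ (or a fixed normalization thereof, chosen so that it is independent of the choice of Vaisman representative — here one uses \ref{FLeev}, that all Vaisman Lee vector fields are positive multiples of $B$, to pin down the normalization).

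Conversely, given $(C,a)$, Theorem \ref{classif} produces the unique good toric symplectic cone $(N_C,\omega_C,X_C,\mu_C)$ with moment cone $C$, together with its $G$-invariant compatible complex structure $J_C$; this is a \Ka\ cone $(N_C\cong S_C\times\RR, J_C, \omega_C, X_C=\del/\del t)$ in the normal form of Remark \ref{trivialisation}. One then lets $\ZZ$ act by the $t$-translation $\rho_a$ combined with the identity on $S_C$ — this acts holomorphically, properly, freely, by the homothety $\rho_a^*\omega_C=\e^{-a}\omega_C$, and preserves $X_C$ — so by \cite[Proposition~7.3]{gop} the quotient $M_{(C,a)}:=N_C/\rho_a^{\ZZ}$ carries a natural toric Vaisman structure, in particular a good toric LCS $G$-structure. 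The torus action descends because it commutes with $\rho_a$.

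The remaining work is to check that these two assignments are mutually inverse up to $G$-equivariant conformal automorphism. Starting from $(M,[\Omega],G)$, producing $(C,a)$, and then forming $M_{(C,a)}$: the cover $\hat M$ is, by construction, $G$-equivariantly $\Ka$-isomorphic to $N_C$ by the uniqueness in \ref{classif}, and this isomorphism intertwines the two $\ZZ$-actions (both are homotheties of the same ratio $\e^{-a}$ preserving the Liouville field, and on a \Ka\ cone of compact type such a homothety is determined up to the flow of $X_C$, which is absorbed into the quotient) — hence it descends to a $G$-equivariant biholomorphism $M\to M_{(C,a)}$ taking the Vaisman class to the Vaisman class, a fortiori a conformal automorphism of the LCS structures. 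The other direction is immediate from the same uniqueness statements. The main obstacle I anticipate is precisely this rigidity step: showing that the generator of $\Gamma$ is determined, up to the choices already accounted for, by the single scalar $a$, i.e.\ that a homothety of a good toric \Ka\ cone preserving $X_C$ and covering the identity on the quotient base is unique up to the $\RR$-action generated by $X_C$ and the torus $G$ — this should follow from the normal form of Remark \ref{trivialisation} together with the fact that the automorphism group of $(N_C,J_C,\omega_C)$ commuting with $X_C$ and $G$ is, by effectiveness and maximality of the torus, not much larger than $G\times\RR$, but it requires care. A secondary point to be careful about is well-definedness of $a$: a priori one only has a conformal class, and different Vaisman representatives have Lee forms $\theta_V$ differing by an exact form (\ref{FLeev}), so $[\theta_V]_{dR}$ and hence $\chi_{[\theta_V]}$, and hence $a$, are genuinely well-defined on the conformal class.
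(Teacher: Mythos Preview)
Your overall strategy coincides with the paper's: pass to the minimal symplectic cover, recognise it as a good toric symplectic cone, and invoke Lerman's \ref{classif}. The forward assignment $(M,[\Omega])\mapsto(C,a)$ is correct, though your justification of compact type is garbled: $S=\hat M/\RR$ does not fiber over $M$, and rationality of $[\theta]_{dR}$ is not something to be ``expected'' --- it is \emph{equivalent} to $\Gamma$ having rank one, which is a result of \cite{i17} that you should cite directly.

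The genuine gap is in your uniqueness argument. You take $B$ as the Liouville field, so the deck generator $\gamma$ acts on $\hat M\cong S\times\RR$ as $(s,t)\mapsto(g\cdot s,t+a)$ for some $g\in G$ (coming from the $d_{\Lg}$-part of $d$ in \eqref{d}); thus $M$ is a mapping torus, not a product. Your rigidity claim that a homothety of ratio $\e^{-a}$ preserving $X_C$ is ``determined up to the flow of $X_C$'' is false --- it is only determined up to $G\times\RR$, as you yourself note a few lines later --- and the torus ambiguity is \emph{not} absorbed into the quotient by $\rho_a^{\ZZ}$. To complete your route you would need to show that for every $g\in G$ the two mapping tori $N_C/(g\circ\rho_a)^{\ZZ}$ and $N_C/\rho_a^{\ZZ}$ are $G$-equivariantly conformally LCS-isomorphic; this is true (choose a path $g_t$ in $G$ and untwist using $G$-invariance of the contact form), but you do not supply it. The paper avoids this entirely via \ref{Vaismant}: there the Liouville field is taken to be $X=\frac{1}{a}(\psi_1^{-1})_*d$ rather than $B$, so that $\gamma=\Phi_X^a$ exactly and $M\cong S\times\RR/a\ZZ$ is a genuine product. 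Uniqueness then reduces to a contactomorphism of the bases, furnished directly by \ref{classif}, and one simply takes $F\times\id_{\RR/a\ZZ}$. Note also that the paper works purely in the LCS/contact category for this step --- the complex structure is used only to produce the good symplectic cone structure, not to classify --- whereas you aim for a biholomorphism, which is stronger than needed and is not what Lerman's uniqueness provides.
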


Let us start by recalling the result of \cite{i17} together with a sketch of its proof:

\begin{theorem}(\cite[Theorem~A]{i17})\label{toricLCK}
Let $(M, J, [\Omega],[\theta]_{dR},G)$ be a compact toric LCK manifold with $[\theta]_{dR}\neq 0$. Then there exists an LCS form $\Omega_1$ so that $(M, J, \Omega_1, G)$ is a toric Vaisman manifold.
\end{theorem}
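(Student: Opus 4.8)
\textbf{Proof proposal for Theorem \ref{toricLCK}.}

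The plan is to reconstruct an equivariant Vaisman structure by first passing to the minimal symplectic cover $(\hat M,\omega)$ and exploiting the rigidity of toric data there. By \cite[Corollary~4.5]{i17} (quoted in Section \ref{SecToricLCS}), the $G$-action lifts to $\hat M$ and makes $(\hat M,\omega,G)$ a toric symplectic manifold with moment map $\hat\mu=\e^{-\phi}\pi^*\mu_\Omega$; moreover the compatible complex structure $J$ pulls back to a $G$-invariant complex structure on $\hat M$ for which $(\hat M,\omega,J)$ is K\"ahler, and the deck group $\Gamma$ acts by holomorphic homotheties. So the first step is to record that $\hat M$ carries a $G\times\Gamma$-invariant K\"ahler structure together with the commuting $\RR_{>0}$-worth of homotheties generated by $\Gamma$. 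The second step is to produce, on this cover, a genuine \emph{K\"ahler cone} structure: one wants a Liouville vector field $X$ with $\LL_X\omega=-\omega$, $\LL_X J=0$, preserved by $G$, whose flow together with $\Gamma$ generates a proper $\RR$-action, so that $\hat M$ becomes a good toric symplectic cone in the sense of Section \ref{SecSymplCone}. The homothety character $\chi_{[\theta]}$ gives one such direction; the point is to average/correct it using the torus action and the convex geometry of $\hat\mu$ so that the resulting $X$ is compatible with $J$ and complete with proper flow, i.e.\ the quotient $\hat M/\RR$ is a compact contact toric manifold.

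The third step is then the converse direction of the K\"ahler-cone/Vaisman dictionary recorded in the Remark following \ref{classif}: given the K\"ahler cone $(\hat M,J,\omega,X)$ with the free proper holomorphic homothety action of $\Gamma$ commuting with everything and preserving $X$, the quotient $(\hat M/\Gamma, J, \e^{t}\omega, dt)$ — written in the trivialisation $\hat M\cong S\times\RR$ determined by $J$ as in Remark \ref{trivialisation} — is a Vaisman manifold with Lee vector field $X=B$, and by construction it carries the residual $G$-action by biholomorphisms and isometries. Descending $\e^t\omega$ through $\pi$ produces an LCS form $\Omega_1$ in the same conformal class (same Lee class $[\theta]_{dR}$, since the homothety character is unchanged) which is Vaisman and $G$-invariant, and for which $G$ is still twisted Hamiltonian; hence $(M,J,\Omega_1,G)$ is a toric Vaisman manifold. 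The fourth step is to check the toric compatibility is preserved, i.e.\ that $G$ acts effectively and Hamiltonianly on $(M,\Omega_1)$ — this is immediate because the $G$-action was fixed throughout and twisted Hamiltonianness is conformally invariant by \eqref{confmu}.

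I expect the main obstacle to be the second step: upgrading the bare toric K\"ahler data on the cover to an honest \emph{K\"ahler cone} with the correct completeness/properness of the Liouville flow and with $\LL_X J = 0$. Producing a vector field $X$ with $\LL_X\omega=-\omega$ is cheap (any primitive $\la$ of $\omega$ with $\iota_X\omega=-\la$ works locally, and $d\phi$ gives the homothety direction globally), but arranging simultaneously that $X$ is holomorphic, $G$-invariant, has no zeros, and generates a \emph{proper} $\RR$-action whose quotient is compact is where the real work lies — this is essentially where one must invoke the structure theory of toric K\"ahler manifolds and the description of $\hat\mu(\hat M)$ as (the interior of) a cone, so that $\hat M$ is forced to be the total space of a symplectisation of a compact contact toric manifold. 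Everything else (steps one, three, four) is bookkeeping with the definitions of Sections \ref{SecDef}--\ref{SecSymplCone} plus the cited facts \ref{FLeev}--\ref{Fexact} and the K\"ahler-cone Remarks.
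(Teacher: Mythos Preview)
Your outline has the right architecture (pass to the cover, manufacture cone data, descend), but step~2 --- which you yourself flag as the crux --- is a genuine gap, and the paper resolves it by a mechanism you have not anticipated. The paper does \emph{not} try to build a holomorphic Liouville field $X$ on $\hat M$ abstractly from the homothety character and the convex geometry of $\hat\mu$. The key structural fact is instead that the complex structure determines a complexified torus $G^J\cong(\CC^*)^n$ acting holomorphically on $\hat M$, and one proves that the deck group $\Gamma$ is free abelian of rank one and lies \emph{inside} $G^J$. Writing the generator as $\gamma=\exp_{G^J}(d)$ with $d=d_\Lg+d_{J\Lg}\in\Lg\oplus J\Lg$ and $d_{J\Lg}\neq 0$, the flow $\Phi_t=\exp_{G^J}(td)$ is then automatically holomorphic, $G$-commuting, and descends to an $\Ss^1$-action on $M$. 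No averaging or moment-cone argument is needed to force $\LL_X J=0$; holomorphicity is built into the source of the vector field.

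The logical order is also reversed relative to your plan. The paper does not first establish a K\"ahler cone on $\hat M$ and then quote the cone/Vaisman dictionary; it works directly on $M$. After replacing $\theta$ by its $\Ss^1$-average $\theta_0=\theta+du$, one averages the LCK form itself,
\[
\Omega_1=\int_0^1\Phi_s^*(\e^u\Omega)\,ds,
\]
and this $\Omega_1$ is shown (in \cite{i17}) to be Vaisman for $J$ with Lee form $\theta_0$. The K\"ahler cone structure on $\hat M$ and the contact model $S\times\Ss^1$ appear only afterwards, in \ref{Vaismant}, as consequences. Two smaller corrections: $\Omega_1$ is \emph{not} in the conformal class $[\Omega]$ in general (the theorem only asserts existence of some toric Vaisman form for the given $J$; the conformal equivalence up to a $G$-equivariant diffeomorphism is obtained separately via Moser in \ref{Vaismant}); and the identification of the Lee vector field with $d_{J\Lg}$ (\ref{theLee}) is what later makes the Liouville field on $\hat M$ explicit, rather than the other way around.

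In short, the missing idea is $\Gamma\subset G^J$. Once you have it, step~2 dissolves and the argument is an averaging on $M$, not a cone construction on $\hat M$.
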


The Vaisman form $\Omega_1$ is constructed as follows. We first choose a $G$-invariant representative $(\Omega,\theta)$ in $[\Omega]$, and let $(\hat M, \omega=\e^{-\phi}\pi^*\Omega,G)$ be the corresponding toric symplectic minimal cover, where $\pi^*\theta=d\phi$. We denote also by $J$ the pull-back complex structure on $\hat M$. 

The complex structure $J$ determines a complexified torus $G^J\cong(\CC^*)^n$ which acts effectively on $\hat M$, so that $G^J \subset\Aut(\hat M)$. One shows then that the deck group $\Gamma$ of the covering $\hat M\rightarrow M$ is free abelian of rank $1$ and is a subgroup of $G^J$. Let $\gamma$ be the generator of $\Gamma$ on which $\int_\gamma\theta=a >0$. Then there exists an element:
\begin{equation}\label{d}
 d\in\Lie(G^J)=\Lg\oplus J\Lg, \ \  d=d_\Lg+d_{J\Lg} \ \text{ with } d_{J\Lg}\neq 0,
\end{equation} 
so that $\gamma=\exp_{G^J} d$. Thus we have an effective $\RR$-action on $\hat M$ given by $\Phi_t(x)=\exp_{G^J}(td).x$ which commutes with the $G$-action and descends to an $\Ss^1=\RR/\ZZ$-action on $M$, holomorphic with respect to $J$. We denote the corresponding one-parameter group on $M$ also by $\Phi_t$. 

Next, one averages $\theta$ over $\Ss^1$ in order to obtain an invariant form: \begin{equation*}
\theta_0=\int_0^1\Phi_s^*\theta ds=\theta+du, \ \ \ u\in\ce(M) \ G\text{-invariant}. 
\end{equation*}
Let us define:
\begin{equation}\label{Ot}
\Omega_t=\frac{1}{t}\int_0^t\Phi_s^*(\e^u\Omega)ds, \ \ \ t\in(0,1], \ \ \Omega_0=\e^u\Omega.
\end{equation}
Then $(\Omega_t)_{t\in [0,1]}$ is a smooth family of $G$-invariant LCS forms on $M$ with Lee form $\theta_0$. As it turns out, $\Omega_1$ is a  toric Vaisman structure with respect to $J$. 

\begin{remark}\label{theLee}
Note that in the formula \eqref{d} defining the vector field $d$, we have $d_{J\Lg}=d-d_{\Lg}\in\aut(J,\Omega_1)$, $Jd_{J\Lg}\in\Lg\subset\aut(J,\Omega_1)$ and moreover $\theta_0(d_{J\Lg})=\theta_0(d)=a\neq 0$. It follows thus, by \cite[Proposition~3]{i18}, that $d_{J\Lg}$ is a constant multiple of the Lee vector field of the Vaisman metric. 
\end{remark}

\begin{corollary}\label{Vaismant}
Any good compact toric LCS manifold $(M, [\Omega], [\theta]_{dR}, G)$ is of Vaisman type, meaning that there exists a complex structure $J'$ and a representative $\Omega'\in[\Omega]$ so that $(\Omega', J')$ is a $G$-invariant Vaisman structure on $M$. Thus its minimal cover $(\hat M,\omega, G)$ admits the structure of a good toric symplectic cone. Moreover, there exists a co-oriented contact manifold $(S,\al)$ endowed with a $G$-action for which $\al$ is $G$-invariant and a $G$-equivariant diffeomorphism $F:M\rightarrow S\times \Ss^1$, so that:
\begin{equation}\label{contactLCS}
F^*(d\al-dt\wedge\al)=\Omega'
\end{equation}
where $t$ denotes the local coordinate on $\Ss^1$.
\end{corollary}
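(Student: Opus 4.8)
\textbf{Proof plan for Corollary \ref{Vaismant}.}
The plan is to assemble the statement from \ref{toricLCK}, \ref{theLee}, and the trivialisation machinery for \Ka\ cones discussed in Section \ref{SecSymplCone}. First I would invoke \ref{toricLCK}: since $(M,[\Omega],[\theta]_{dR},G)$ is good, there is a compatible complex structure $J'=J$ and a $G$-invariant representative $\Omega'=\Omega_1\in[\Omega]$ making $(M,J',\Omega',G)$ a toric Vaisman manifold; in particular $(M,J')$ is of Vaisman type. By the remark following \ref{classif} (the Vaisman $\leftrightarrow$ \Ka\ cone correspondence, \cite{v79}, \cite[Theorem~4.2]{gopp}), the minimal symplectic cover $(\hat M,\omega=\e^{-\phi}\pi^*\Omega',B)$ is a \Ka\ cone, where $B=d_{J\Lg}$ (up to a positive constant, by \ref{theLee}) is the Liouville vector field; the $G$-action lifts to $\hat M$ and preserves $B$ and the compatible complex structure, so $(\hat M,\omega,B,\hat\mu,G)$ is a toric symplectic cone. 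It is of compact type because $M$ is compact and $\hat M/\RR\cong M/\Ss^1$ (the flow of $B$ on $\hat M$ covers the $\Ss^1$-action $\Phi_t$ on $M$), hence it is in fact a good toric symplectic cone.

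Next I would produce the contact manifold and the diffeomorphism. Normalise $\Omega'$ so that its Lee vector field $B$ has unit norm; then $\e^{-f}:=\omega(B,JB)\equiv 1$ on $\hat M$, so the natural $J$-determined trivialisation of the $\RR$-principal bundle $p:\hat M\to S:=\hat M/\RR$ described after \ref{trivialisation} takes the simplest form, with slice $S_{\hat M}=f^{-1}(0)=\hat M$ itself in the normalised picture; concretely it gives a $G$-equivariant diffeomorphism $\hat F:S\times\RR\to\hat M$ with $\hat F^*B=\del/\del t$ and $\hat F^*\omega=d(\e^{-t}\hat F^*\la)$, where $\la=-\iota_B\omega$ is the Liouville form. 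Setting $\al:=\e^t\hat F^*\la\in[\al]$ gives a $G$-invariant contact form on $S$ (it is $t$-independent because $\hat F^*X=\del/\del t$ contracts it to a function which one checks is $1$, using $\la(B)=\Omega'(\cdot,\cdot)$-type identities), and then $\hat F^*\omega=d(\e^{-t}\al)$. The deck group $\Gamma\cong\ZZ$ acts by $t\mapsto t+a$ on the $\RR$-factor (since $\gamma^*\phi=\phi+a$ and $\gamma=\exp_{G^J}d$ with $\theta_0(d_{J\Lg})=a$, by \ref{theLee}), commuting with $G$ and with $\del/\del t$. Taking the quotient by $\ZZ\cdot a$ (rescaling $t$ to have period $1$) turns $S\times\RR$ into $S\times\Ss^1$ and descends $\hat F$ to the desired $G$-equivariant diffeomorphism $F:M\to S\times\Ss^1$.

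It remains to identify $\Omega'$ under $F$. On $M\cong(S\times\RR)/(\ZZ\cdot a)$ the Vaisman form pushed down from the \Ka\ cone is $\Omega'=\e^{t}\cdot(\text{pullback of }\omega)$ in the \Ka-cone-to-Vaisman dictionary, which in the trivialised coordinates reads (after the rescaling absorbing $a$) $\Omega'=d\al-dt\wedge\al$: indeed $\e^t\,d(\e^{-t}\al)=\e^t(\e^{-t}d\al-\e^{-t}dt\wedge\al)=d\al-dt\wedge\al$. This is exactly \eqref{contactLCS}, and compatibility of all $G$-equivariance has been tracked at each step.

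\textbf{Main obstacle.} The genuinely delicate point is the bookkeeping of the trivialisation: showing that the $J$-determined slice and the one-parameter flow of the (normalised) Lee vector field $B$ fit together so that $\Gamma$ acts purely by the translation $t\mapsto t+a$ on the $\RR$-factor \emph{and} $G$ acts only on the $S$-factor, simultaneously, so that the quotient is literally a product $S\times\Ss^1$ with $G$ acting on the first factor and $t$ a genuine $\Ss^1$-coordinate. This requires carefully combining: the fact that $\Gamma\subset G^J$ is generated by $\exp_{G^J}d$ with $d=d_\Lg+d_{J\Lg}$, that $d_{J\Lg}$ is (a positive multiple of) $B$ by \ref{theLee}, that $Jd_{J\Lg}\in\Lg$ so the $d_\Lg$ and $Jd_{J\Lg}$ directions are absorbed into the $G$-orbits, and that $\phi$ (hence the trivialising function $f$) is $G$-invariant. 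Once this normal form for the $\Gamma\times G$-action on $S\times\RR$ is in place, everything else is a direct computation with $d(\e^{-t}\al)$.
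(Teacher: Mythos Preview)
Your proposal has a genuine gap at the very first step. You write that \ref{toricLCK} gives ``a $G$-invariant representative $\Omega'=\Omega_1\in[\Omega]$'', but this is not what \ref{toricLCK} says. The Vaisman form $\Omega_1$ produced there is an \emph{averaged} form $\Omega_1=\int_0^1\Phi_s^*(\e^u\Omega)\,ds$; it has Lee form $\theta_0\in[\theta]_{dR}$, but it is \emph{not} in general a conformal multiple of $\Omega$. So at this stage you only know that $(M,J)$ carries \emph{some} toric Vaisman metric $\Omega_1$, not one in the given conformal class. The statement of \ref{Vaismant}, however, demands $\Omega'\in[\Omega]$; this is precisely the content you are missing, and it is the heart of the paper's proof.

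The paper bridges this gap with a Moser-type argument. Since $(M,J)$ is of Vaisman type, \ref{Fexact} gives $H^2_\theta(M)=0$, so the whole family $(\Omega_t)_{t\in[0,1]}$ from \eqref{Ot} consists of $d_{\theta_0}$-exact LCS forms with fixed Lee class. The LCS Moser lemma then yields a $G$-equivariant isotopy $\psi_t$ with $\psi_1^*\Omega_1\in[\Omega_0]=[\Omega]$, and one sets $\Omega':=\psi_1^*\Omega_1$ and $J':=\psi_1^*J$. This also explains why the corollary allows a \emph{new} complex structure $J'$: your choice $J'=J$ cannot work in general, because there is no reason for $[\Omega]$ itself to contain a $J$-Vaisman representative. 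The $G$-equivariance of $\psi_t$ is not automatic either; the paper secures it by choosing a $G$-invariant $d_{\theta_0}$-primitive $\eta$ and tracking invariance through the construction of the Moser vector field.

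Once $\Omega'\in[\Omega]$ is in hand, the contact trivialisation is indeed routine, and your outline there is essentially right (modulo the slip $\omega(B,JB)=\e^{-\phi}$ rather than $1$; the trivialising function is $\phi'$, not a constant). The Liouville field is taken to be $X=\tfrac{1}{a}(\psi_1^{-1})_*d$ rather than $B$ directly, which makes the $\Gamma$-action exactly $t\mapsto t+a$ without any further bookkeeping. So the ``main obstacle'' you identify is not the real one: the genuine work is the Moser step, which you have skipped.
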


\begin{proof}

We fix a compatible complex structure $J$, and use the same notation as before. \ref{toricLCK} together with \ref{Fexact} imply that $\Omega$ and each $\Omega_t$ defined by \eqref{Ot} are exact LCS forms. We are thus in the hypotheses of the LCS variant of the Moser stability theorem, namely: we have a family of LCS forms $\Omega_t$ with constant de Rham Lee class $[\theta]_{dR}$ and with constant class in the Lichnerowicz  cohomology $[\Omega_t]_{d_\theta}=0\in H^2_{\theta}(M,\RR)$. Then Moser's trick (\cite{b02} and \cite{bk}) ensures the existence of an isotopy $(\psi_t)_{t\in [0,1]}\subset\Aut(M)$, $\psi_0=\id_M$, with $\Omega':=\psi_1^*\Omega_1\in[\Omega_0]=[\Omega]$. Thus:
 \begin{equation*}
 J':=\psi_1^*J=d\psi_1^{-1}Jd\psi_1
 \end{equation*} 
 is an integrable complex structure on $M$ compatible with $[\Omega]$ and $(\Omega', J')$ is Vaisman.

In order to see that $\Omega'$ and $J'$ are moreover $G$-invariant, we need to recall the construction of $\psi_t$ and to check that it is $G$-equivariant. Let us thus choose $\eta\in\ce(T^*M)$ $G$-invariant so that $\Omega_0=d_{\theta_0}\eta$. Note that, as $\theta_0$ is already $G$-invariant, this is always possible after averaging over $G$ any $d_{\theta_0}$ primitive of $\Omega_0$.  

Define the smooth family of $G$-invariant one forms:
\begin{equation*}
\eta_t:=\frac{1}{t}\int_0^t\Phi_s^*\eta ds, \ \ \ t\in (0,1], \ \ \eta_0=\eta
\end{equation*}
so that $\Omega_t=d_{\theta_0}\eta_t$. Let $X_t$ be the time-dependent $G$-invariant vector field given by:
\begin{equation*}
\iota_{X_t}\Omega_t=-\frac{d}{dt}\eta_t.
\end{equation*}
Then $X_t$ uniquely defines a family of diffeomorphisms $\psi_t$ by:
\begin{equation}\label{phit}
\frac{d}{dt}\psi_t(x)=X_t(\psi_t(x)), \ \ \psi_0(x)=x, \ \ x\in M
\end{equation}
and one checks that $\psi_t$ acts on $\Omega_t$ by:
\begin{equation*}
\psi_t^*\Omega_t=\e^{\int_0^t\psi_s^*\theta_0(X_s)ds}\Omega_0.
\end{equation*}

Note that as $X_t$ is $G$-invariant, by the uniqueness of the solution of \eqref{phit}, $\psi_t$ is indeed $G$-equivariant.

Denote by $\theta'=\psi_1^*\theta_0$ the Lee form of $\Omega'$ and let $\pi^*\theta'=d\phi'$ on $\hat M$. Then the vector field $X:=\frac{1}{a}(\psi_1^{-1})_*d$ is a Liouville vector field for the minimal symplectic cover $(\hat M, \omega=\e^{-\phi'}\pi^*\Omega',G)$, as $\theta'(X)=\frac{1}{a}\theta_0(d)=1$ and we have:
\begin{equation*}
\LL_{aX}\Omega'=\psi_1^*(\LL_d\Omega_1)=0.
\end{equation*} 
Also, since $\Gamma$ is of rank one, $\hat M/\RR$ is compact, so $(\hat M,\omega, X,G)$ is a good toric symplectic cone. 

Finally, note that the vector field $X$ together with the $G$-invariant function $\phi'$ define a $G$-equivariant diffeomorphism as in \ref{trivialisation}:
\begin{equation*}
S\times\RR\rightarrow \hat M, \ \ (y,t)\mapsto \Phi_X^t(y)
\end{equation*}
where $S=(\phi')^{-1}(0)$ is endowed with the induced $G$-action from $\hat M$. This diffeomorphism descends to a diffeomorphism $F:S\times \RR/a\ZZ\rightarrow M$ so that $F^*\Omega'=d\al-dt\wedge \al$, where $\al=-\iota_X\omega|_S$ is a $G$-invariant contact form on $S$. 
\end{proof}

\begin{proof}[\textbf{Proof of~\ref{classifLCS}}]
The proof consists in the following two claims:

\begin{clm} One can naturally associate to a good toric LCS manifold $(M,[\Omega],[\theta]_{dR}, G)$ a pair consisting in a good cone $C\subset\Lg^*$ and a positive number $a$. Moreover, the pair $(C,a)$ is invariant to conformal $G$-equivariant automorphisms of $(M,[\Omega], [\theta]_{dR}, G)$. 
\end{clm}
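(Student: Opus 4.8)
The plan is to extract $C$ and $a$ from data that is manifestly intrinsic to $(M,[\Omega],[\theta]_{dR},G)$, and then to realise $C$ as the moment cone of a good toric symplectic cone, so that \ref{classif} applies.

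\emph{Definition of the pair.} As the LCS structure is strict ($[\theta]_{dR}\neq 0$), for every representative $\Omega$ the twisted Hamiltonians are uniquely determined, so $\Omega$ has a canonical moment map $\mu^\Omega\colon M\to\Lg^*$, and by the transformation law \eqref{confmu} the moment cone $C:=\RR_{\geq 0}\cdot\mu^\Omega(M)=\RR_{\geq 0}\cdot\hat\mu(\hat M)$ of Section \ref{SecToricLCS} depends only on $[\Omega]$. For the number, I recall that in the construction underlying \ref{toricLCK} one proves, following \cite{i17}, that the deck group $\Gamma$ is free abelian of rank one; hence the image of the period map $\chi_{[\theta]}\colon\pi_1(M)\to\RR$ is a discrete subgroup $a\ZZ$ with $a>0$, and $a$ depends only on $[\theta]_{dR}$. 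This defines the pair $(C,a)$.

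\emph{$C$ is a good cone.} By \ref{Vaismant} there is a $G$-invariant Vaisman representative $(\Omega',J')\in[\Omega]$; normalise it so that its Lee vector field $B$ has $\theta'(B)=1$, write $\pi^*\theta'=d\phi'$ and $\omega=\e^{-\phi'}\pi^*\Omega'$, and let $\hat B$, $\hat{J'}$ be the lifts of $B$, $J'$ to $\hat M$. Since $\Omega'$ is Vaisman one has $\LL_B\Omega'=0$ and $\LL_B\theta'=0$, from which $\LL_{\hat B}\omega=-\omega$ (using $\LL_{\hat B}\phi'=1$), so $\hat B$ is a Liouville field; together with $\hat{J'}$ (compatible because $B$ is holomorphic) and the fact that the $\RR$-quotient is compact (as $\Gamma$ has rank one, cf. \ref{Vaismant}), this exhibits $(\hat M,\omega,\hat B,\hat{J'},G)$ as a good toric symplectic cone. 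Moreover $\LL_B\theta'=0$ together with $H^0_{\theta'}(M)=0$ forces $\LL_B\mu^{\Omega'}=0$, so $\hat\mu:=\e^{-\phi'}\pi^*\mu^{\Omega'}$ satisfies both $\iota_V\omega=d\hat\mu_V$ and $\LL_{\hat B}\hat\mu=-\hat\mu$, i.e. it is the moment map of this symplectic cone. Hence its moment cone is $\hat\mu(\hat M)\cup\{0\}=\RR_{\geq 0}\cdot\mu^{\Omega'}(M)=C$, and \ref{classif} shows that $C$ is good.

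\emph{Invariance.} Let $\psi$ be a $G$-equivariant automorphism of $M$ with $\psi^*[\Omega]=[\Omega]$, and write $\psi^*\Omega=\e^f\Omega$, so that $\psi^*\theta=\theta+df$ and $\psi^*[\theta]_{dR}=[\theta]_{dR}$. For the number, $\int_{\psi_*\gamma}\theta=\int_\gamma\psi^*\theta=\int_\gamma\theta$ for every $\gamma\in\pi_1(M)$, so $\chi_{[\theta]}(\pi_1(M))$, and hence $a$, is unchanged. For the cone, $G$-equivariance gives $\psi_*V=V$ for $V\in\Lg$, so comparing $\iota_V(\psi^*\Omega)=\psi^*(\iota_V\Omega)$ and invoking the uniqueness of twisted Hamiltonians yields $\psi^*\mu^\Omega=\e^f\mu^\Omega$, whence $\mu^\Omega(\psi(x))=\e^{f(x)}\mu^\Omega(x)$; taking $\RR_{\geq 0}$-spans shows $C$ is unchanged.

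The step I expect to take the most care is the identification of $C$ with the moment cone of the symplectic-cone structure on $\hat M$ in the second paragraph: one has to verify that the canonically normalised LCS moment map $\mu^{\Omega'}$ pulls back to the cone-equivariant moment map on $\hat M$, which is precisely where the Vaisman hypotheses — $B$ holomorphic, Killing, and of constant norm, so that $\LL_B\mu^{\Omega'}=0$ — are needed.
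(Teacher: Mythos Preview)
Your argument is correct and follows essentially the same route as the paper: define $C$ as the LCS moment cone and $a$ as the positive generator of $\im\chi_{[\theta]}$, invoke \ref{Vaismant} to obtain a good toric symplectic cone structure on $\hat M$, and conclude via \ref{classif} that $C$ is good; the invariance argument is also the same. The only difference is that you spell out in detail why the LCS moment cone coincides with the symplectic-cone moment cone (checking $\LL_{\hat B}\hat\mu=-\hat\mu$ via $\LL_B\mu^{\Omega'}=0$), a point the paper simply asserts.
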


The real $a>0$ is defined to be the first positive period of $[\theta]_{dR}$. More precisely, if $\Gamma$ denotes the deck group of the minimal cover $(\hat M,\omega)$, we have the period morphism 
\begin{equation*}
\chi_{[\theta]}:\Gamma\rightarrow \RR, \ \ \gamma\mapsto \int_\gamma\theta.
\end{equation*}
As $\Gamma$ is of rank one, there exists a unique number $a>0$ so that $\im\chi_{[\theta]}=a\ZZ\subset\RR$.
 
We let $C$ be the moment cone of the LCS manifold. By \ref{Vaismant}, there exists a vector field $X\in\ce(T\hat M)$ so that $(\hat M,\omega, X,G)$ becomes a good symplectic cone. By \ref{classifLCS}, $C$, which is also the cone of the symplectic cover, is a good cone.

Finally, suppose that we have a $G$-equivariant automorphism $F:(M_1,[\Omega_1],[\theta_1]_{dR},G)\rightarrow (M_2,[\Omega_2],[\theta_2]_{dR},G)$. Fix LCS forms $\Omega_1\in[\Omega_1]$ and $\Omega_2\in[\Omega_2]$ so that $F^*\Omega_2=\Omega_1$. Then clearly $\mu_{\Omega_1}=\mu_{\Omega_2}\circ F$, so the moment cones of the two manifolds coincide. Moreover, we have $F^*[\theta_2]_{dR}=[\theta_1]_{dR}$, so $\chi_{[\theta_1]}=\chi_{[\theta_2]}\circ F_*$, where $F_*$ is the morphism induced at the level of deck groups. Thus we have an equality between the corresponding periods $a_1=a_2$.

\begin{clm}
Given a pair consisting in a good cone $C\subset\Lg^*$ and a positive number $a$, there exists a good toric LCS manifold $(M_{C,a},[\Omega_{C,a}],G)$ associated to it, which is unique up to equivariant automorphisms.
\end{clm}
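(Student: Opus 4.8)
The plan is to construct $M_{C,a}$ from Lerman's good symplectic cone and a chosen Liouville-type flow, then to verify uniqueness by reversing the construction from the previous corollary. First I would invoke \ref{classif}: to the good cone $C\subset\Lg^*$ there corresponds a unique good toric symplectic cone $(N_C,\omega_C,X_C,\mu_C)$ of compact type, carrying a $G$-invariant compatible complex structure $J_C$ with $\LL_{X_C}J_C=0$. As discussed in \ref{trivialisation} (and the \Ka\ cone remark following \ref{contact}), the function $\e^{-f}:=\omega_C(X_C,J_CX_C)$ is $\RR$-equivariant and yields a $G$-equivariant trivialisation $F_C:S\times\RR\to N_C$ with $F_C^*X_C=\frac{\del}{\del t}$ and $F_C^*\omega_C=d(\e^{-t}F_C^*\la_C)$, where $S=N_C/\RR$ inherits a $G$-invariant co-oriented contact structure $[\al]$. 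I then set $\Gamma_a=a\ZZ$ acting on $N_C\cong S\times\RR$ by $t\mapsto t+a$ (translation in the trivialising coordinate), which acts freely, properly, holomorphically with respect to $J_C$, by homotheties $\rho_a^*\omega_C=\e^{-a}\omega_C$ on $\omega_C$, and preserves $X_C$. Hence, by the converse direction of the \Ka\ cone remark, $M_{C,a}:=N_C/\Gamma_a$ with the complex structure induced by $J_C$, the form $\Omega_{C,a}$ induced by $\e^{t}\omega_C$ (equivalently $d\al-dt\wedge\al$ via $F_C$), and the descended $G$-action, is a compact toric Vaisman — in particular toric LCK — manifold. Its Lee class has period group $a\ZZ$ by construction, and the $G$-action is twisted Hamiltonian with moment cone $\RR_{\geq 0}\cdot\hat\mu(\hat M_{C,a})=\RR_{\geq 0}\cdot\mu_C(N_C)=C$ since the lifted moment map is $\mu_C$. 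So $(M_{C,a},[\Omega_{C,a}],G)$ is a good toric LCS manifold with associated pair $(C,a)$.

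For uniqueness up to $G$-equivariant (conformal) automorphism, suppose $(M,[\Omega],[\theta]_{dR},G)$ is a good toric LCS manifold with moment cone $C$ and first positive period $a$. By \ref{Vaismant} there is a representative $\Omega'$, a compatible complex structure $J'$, and a Liouville vector field $X$ making the minimal cover $(\hat M,\omega,X,G)$ a good toric symplectic cone; moreover \ref{Vaismant} provides a $G$-equivariant diffeomorphism onto $S\times\RR$ realising $F^*\Omega'=d\al-dt\wedge\al$ with $\al$ $G$-invariant, and the deck group $\Gamma$ acts as translation by the period $a$ in the $\RR$-coordinate. The moment cone of $(\hat M,\omega,X,\mu)$ equals $C$ by hypothesis, so by the uniqueness clause of \ref{classif} there is a $G$-equivariant symplectomorphism $\Psi:(\hat M,\omega,X)\to(N_C,\omega_C,X_C)$ intertwining the moment maps. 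Since $\Psi$ conjugates the $X$-flow to the $X_C$-flow and both deck actions are translation by $a$ along these flows, $\Psi$ descends to a $G$-equivariant diffeomorphism $M\to M_{C,a}$ pulling $[\Omega_{C,a}]$ back to $[\Omega']=[\Omega]$; hence the two good toric LCS manifolds are equivalent. Together with the previous claim, this establishes the stated bijection.

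The main obstacle I anticipate is making the uniqueness argument genuinely \emph{conformal} rather than just strict: \ref{classif} produces an isomorphism of symplectic cones, but a good toric LCS manifold is only determined up to conformal change, and the Vaisman representative $\Omega'$ chosen by \ref{Vaismant} is not canonical — different choices of the auxiliary data ($G$-invariant representative, the Moser isotopy, the vector field $d$ in \eqref{d}) could a priori give non-conjugate Liouville structures on $\hat M$. The key point that resolves this is \ref{FLeev} together with \ref{theLee}: the Lee vector field of any Vaisman metric on $(M,J)$ is determined up to positive scale, so the induced $\RR$-action on $\hat M$ (hence the Liouville field $X$, after normalising $\theta'(X)=1$) is canonical, and the period $a$ fixes the deck translation length uniquely. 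One also needs that the uniqueness in \ref{classif} can be upgraded to $G$-equivariance of $\Psi$, which is part of Lerman's statement, and that the complex structure descends compatibly — automatic since $\Psi$ can be chosen holomorphic between the two \Ka\ cones (both compatible complex structures on a fixed good symplectic cone are $G$-equivariantly isotopic, as in the uniqueness theory for toric \Ka\ cones). Carefully bookkeeping these normalisations is the technical heart of the proof; the construction direction is routine once the \Ka\ cone remark is in hand.
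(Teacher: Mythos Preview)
Your first two paragraphs are correct and follow the paper's approach: construct $M_{C,a}$ as the quotient of Lerman's cone $N_C$ by the time-$a$ Liouville flow, and for uniqueness use \ref{Vaismant} to endow the minimal cover with a good symplectic cone structure, then invoke \ref{classif}. The paper differs only cosmetically --- it trivialises via a $G$-invariant contact form rather than $J_C$, and it compares two arbitrary good toric LCS manifolds directly by passing to the induced contactomorphism $F:S_1\to S_2$ and using $F\times\id_{\RR/a\ZZ}$, rather than comparing each to the fixed model $M_{C,a}$.

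Your third paragraph, however, manufactures a non-issue. The classification is of toric \emph{LCS} manifolds up to $G$-equivariant conformal automorphism; the adjective ``good'' only records that \emph{some} compatible complex structure exists, and that structure is not part of the data being matched. Hence $\Psi$ need not be holomorphic, and your appeal to a uniqueness theory for toric \Ka\ cones is both unnecessary and not something established here (nor obvious in this generality). Likewise, the worry about non-canonical Liouville structures is moot: whichever Liouville field \ref{Vaismant} produces, the resulting good symplectic cone has moment cone $C$, so the uniqueness clause of \ref{classif} applies regardless, and since $\Psi_*X=X_C$ the deck actions (each being the time-$a$ Liouville flow) are automatically intertwined. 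Drop the holomorphy claim and the invocation of \ref{FLeev}/\ref{theLee}, and the argument is complete as it stands.
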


By \ref{classif}, there exists, up to $G$-equivariant automorphisms, a unique good symplectic cone $(N_C,\omega_C, X_C,G)$ having $C$ as moment cone. Let $(S,[\al])$ be the corresponding contact manifold, cf. \ref{contact}, endowed with the natural action of $G$, cf. \ref{contactToric}. After choosing a $G$-invariant contact form $\al\in[\al]$, we can identify the symplectic cone with $(S\times \RR, d(\e^{-t}\al), \frac{d}{dt}, G)$, cf. \ref{trivialisation}, where $G$ acts trivially on the $\RR$ factor. Then $M_{C,a}:=S\times\RR/a\ZZ$ endowed with the induced $G$-action and with the LCS structure $[\Omega_{C,a}]=[d\al-dt\wedge\al]$ is a good toric LCS manifold with moment cone $C$.

Suppose now we are given two good toric LCS $G$-manifolds $(M_j, [\Omega_j], G)$, $j=1,2$, with moment cones $C_1=C_2=C$ and corresponding periods $a_1=a_2=a$. Let $X_1$ and $X_2$ be compatible Liouville vector fields on the minimal covers $(\hat M_j,\omega_j,G)$ and let $(S_j=\hat M_j/\RR,[\al_j],G)$ be the corresponding contact manifolds endowed with the natural actions of $G$. By \ref{classif}, there exists a $G$-equivariant isomorphism $\hat F$ between the two symplectic cones. As $\hat F_*X_1=X_2$, it induces a $G$-equivariant contactomorphism $F:(S_1,[\al_1],G)\rightarrow (S_2,[\al_2],G)$. Let $\al_1=F^*\al_2$. 

On the other hand, by \ref{Vaismant}, we have $(M_j,[\Omega_j],G)\cong (S_j\times\RR/a\ZZ, [d\al_j-dt\wedge \al_j],G)$, $j=1,2$, hence $F\times\id_{\RR/a\ZZ}$ induces the desired automorphism between the two LCS manifolds. 
\end{proof}

\subsection*{The image of the moment map}

Consider a good toric LCS manifold $(M,[\Omega],[\theta]_{dR},G)$ with moment cone $C$ and generating period $a>0$. Let $(\hat M,\omega,G)$ be the minimal symplectic cover. Let us moreover fix a compatible Vaisman structure $(\Omega=-dJ\theta+\theta\wedge J\theta, J)$, and let $\mu=\mu_\Omega:M\rightarrow \Lg^*$ be the corresponding moment map. The anti-Lee vector field $A$ defined by $\iota_A\Omega=-\theta$ belongs to $\Lg$ (\cite[Lemma~4.7]{p16}) and has corresponding Hamiltonian $\mu_A=1$. Thus, letting $H_A\subset\Lg^*$ be the affine hyperlpane defined by $A$:
\begin{equation*}
H_A=:\{l\in\Lg^*| \langle l,A\rangle=1\}\subset\Lg^*
\end{equation*}
we have $P_A:=\im\mu(M)=C\cap H_A$. In particular, $P_A$ is an $(n-1)$-dimensional convex polytope. 

Note that, unlike in other toric geometries, $\mu(M)$ does not represent the orbit space of the action of $G$ on $M$. In fact, if we fix an isomorphism $(M,\Omega,G)\cong(S\times\Ss^1, d_{dt}\al,G)$, then the fiber of $l\in P_A$ is given by $\mu^{-1}(l)=\Oo_l\times\Ss^1$, where $\Oo_l$ denotes a $G$-orbit of $G$ on $S$, and thus consists in a circle of $G$-orbits of $M$. In order to understand best the orbit space $M/G$, let us note that, by \ref{classif} and \cite[Lemma~4.3]{l03}, $\hat\mu$ induces a homeomorphism from the orbit space $\hat M/G$ to $\hat\mu(\hat M)=C-\{0\}=:C^*$. Moreover, we have a well-defined free action of $\Gamma$ on $C^*$, with respect to which $\hat{\mu}$ is $\Gamma$-equivariant:
\begin{equation*}
\gamma.l=\e^{-\chi_{[\theta]}(\gamma)}l, \ \ l\in C^*.
\end{equation*}
We thus infer that $\hat\mu ($mod $\Gamma):M\rightarrow C^*/ \Gamma$ is well-defined and induces a homeomorphism: 
\begin{equation*}
M/G\cong C^*/\Gamma\cong P_A\times\RR/a\ZZ.
\end{equation*}
In particular, the orbit space is not contractible.

\section{LCK metrics with potential}\label{SecPot}

This section is dedicated to the proof of the following result:

\begin{theorem}\label{toricPot}
Let $(M,J,[\Omega],[\theta]_{dR},G)$ be a compact toric LCK manifold. Then any $G$-invariant representative $\Omega\in[\Omega]$ admits a unique $G$-invariant potential. Moreover, the potential is positive.
\end{theorem}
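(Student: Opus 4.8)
The plan is to deduce Theorem~\ref{toricPot} from the intermediate result \ref{APot}, whose hypothesis requires the anti-Lee vector field of some Vaisman metric to be an infinitesimal conformal symmetry. First I would fix a $G$-invariant LCK representative $\Omega\in[\Omega]$ with Lee form $\theta$. By \ref{Vaismant} (or \ref{toricLCK} directly) the manifold $(M,J)$ is of Vaisman type, and in fact one may choose the Vaisman metric to be $G$-invariant; call its Lee vector field $B$ and anti-Lee vector field $A=JB$. From the analysis after \eqref{d} and Remark~\ref{theLee} one knows $A\in\Lg$ — indeed the torus action is Hamiltonian and $A$ is a constant multiple of $Jd_{J\Lg}\in\Lg$ — so $A$ is a Killing field of the Vaisman metric and, being in $\Lg$, commutes with the $G$-action and preserves $\Omega$ and $J$; in particular $\LL_A\Omega=0$, so $A$ is an infinitesimal conformal symmetry of $\Omega$. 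Thus the hypothesis of \ref{APot} is satisfied, giving existence of a ($G$-invariant, after averaging over $G$) potential $f$ with $\Omega=2i\del_\theta\cp_\theta f$, together with the uniqueness of the $G$-invariant potential from the cohomological vanishing in \ref{APot}.

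Next I would prove positivity. Here the idea is to reduce to a one-dimensional convexity statement using the description of the orbit space established in the ``image of the moment map'' subsection, namely $M/G\cong P_A\times\RR/a\ZZ$, and more usefully the identification $(M,\Omega,G)\cong(S\times\RR/a\ZZ,\,d\al-dt\wedge\al,\,G)$ from \ref{Vaismant}. A $G$-invariant function on $M$ descends to a function on the orbit space; restricting to a fixed $G$-orbit direction, the potential function $f$ becomes a function of the variable $t\in\RR/a\ZZ$ — or, better, of the non-compact coordinate on the symplectic cover $\hat M=S\times\RR$ — on which the deck group $\Gamma=a\ZZ$ acts by the homothety $t\mapsto t+a$ while the Kähler potential transforms with an exponential weight $\e^{t}$. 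Unwinding the relation $\Omega=2i\del_\theta\cp_\theta f$ on the cover, $\pi^*f\cdot\e^{\phi}$ (up to normalization) is a genuine Kähler potential for $\omega$ pulled back along the Liouville flow, so along the flow lines of $X$ it is, after the standard change of variable, a convex function $h$ of one real variable satisfying the equivariance $h(t+a)=\e^{a}h(t)$ (or an analogous multiplicative relation). A convex function on $\RR$ satisfying $h(t+a)=c\,h(t)$ with $c>1$ cannot take a nonpositive value: if $h(t_0)\le 0$ then convexity forces $h$ to be $\le 0$ and decreasing for all $t\le t_0$ while equivariance forces $h(t_0-ka)=c^{-k}h(t_0)\to 0^{-}$, and convexity between $t_0-ka$ and $t_0$ with both endpoints $\le 0$ but the left one tending to $0$ and the right one fixed negative is eventually violated — so $h>0$ everywhere, hence $f>0$.

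The main obstacle, I expect, is making the positivity argument clean rather than the existence part. Two technical points need care: first, identifying exactly which function of which variable is the convex quantity — this requires tracking the conformal factors in $\omega=\e^{-\phi}\pi^*\Omega$ and the formula $\Omega=2i\del_\theta\cp_\theta f$ through the trivialization $F\colon S\times\RR\to\hat M$ of Remark~\ref{trivialisation}, where $F^*\omega=d(\e^{-t}F^*\al)$ and $F^*X=\del/\del t$; concretely one wants to show that $G$-invariance plus the Vaisman normalization reduces $2i\del_\theta\cp_\theta f=\Omega$ to an ODE of the form $\ddot h = (\text{positive})\cdot h$ or to the statement that $\e^{t}\pi^*f$ restricted to a slice transverse to both $G$ and $X$ is convex in $t$. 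Second, the equivariance constant: one must confirm it is genuinely $>1$ (equivalently $a>0$, which is given, and the weight has the correct sign), so that the ``convex $+$ expanding-periodic $\Rightarrow$ positive'' lemma applies; the degenerate case where $f$ could vanish identically along some orbit must be excluded because $\Omega$ is non-degenerate, which forces the convex function to be non-constant and strictly positive. Once these are pinned down, uniqueness of the $G$-invariant potential is immediate from \ref{APot}, completing the proof.
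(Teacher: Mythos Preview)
Your approach matches the paper's: deduce existence and uniqueness from \ref{APot} via $A\in\Lg$ (the paper cites \cite[Lemma~4.7]{p16}, which is equivalent to what you extract from \ref{theLee}), then prove positivity on the cover by a one-variable convexity argument along the flow of $B$. Note also that averaging over $G$ is unnecessary: since $A\in\Lg$, any $G$-invariant potential is $A$-invariant, and conversely the unique $A$-invariant potential from \ref{APot} is automatically $G$-invariant by uniqueness.

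Two points in your positivity sketch need correction. First, the K\"ahler potential on $\hat M$ is $\hat f=\e^{-\phi}\pi^*f$, not $\e^{\phi}\pi^*f$; with the correct sign the equivariance reads $u(t+a)=\e^{-a}u(t)$ where $u(t)=\hat f(\Phi_B^t(x))$, and strict convexity $u''(t)=2\omega(B,JB)>0$ comes directly from $\omega=2i\del\cp\hat f$ together with $\LL_A\hat f=0$. Second, your contradiction (``convexity forces $h\le 0$ and decreasing for all $t\le t_0$'') is false as stated --- a convex function can be negative at one point and positive nearby. The paper's argument is cleaner and genuinely works: (i) $\hat f$ has no zeros, since $u(0)=0$ would give $u(a)=\e^{-a}u(0)=0$, forcing the strictly convex $u$ to have a local maximum on $[0,a]$; (ii) $u$ has no global minimum $t_0$, since then $u(t_0+a)=\e^{-a}u(t_0)<u(t_0)$; hence $u$ is strictly convex, strictly monotone, and of constant nonzero sign, which together force $u>0$.
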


If we weaken the hypothesis by imposing less symmetry on the LCK metric, then we can still arrive at the same conclusion but without the positivity of the potential, namely:

\begin{theorem}\label{APot}
Let $(M,J)$ be a compact complex manifold of Vaisman type and let $A$ be the anti-Lee vector field of some Vaisman metric. Let $([\Omega],[\theta]_{dR})$ be an LCK structure on $(M,J)$. If $A\in\aut([\Omega])$, then the LCK structure $[\Omega]$ admits a potential. Moreover, for each $A$-invariant form $\Omega\in[\Omega]$, there exists a unique $A$-invariant potential for $\Omega$.
\end{theorem}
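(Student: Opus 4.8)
The strategy is to reduce everything to the vanishing of two cohomology classes in the twisted Dolbeault cohomology $H^{\bullet,\bullet}(M,L_\theta)$, and then to establish that vanishing via Hodge theory with respect to a Laplacian adapted to a Vaisman background metric on $(M,J)$. First I would fix, using \ref{FLeef}, a Vaisman representative $\Omega_0=2i\del_{\theta_0}\cp_{\theta_0}1$ on $(M,J)$ with $\theta_0\in[\theta]_{dR}$, with Lee vector field $B$ and anti-Lee vector field $A$ (of norm-related normalisation $\theta_0(B)=1$). By \ref{FLeev} the anti-Lee vector field of \emph{any} Vaisman metric is a positive multiple of $A$, so the hypothesis $A\in\aut([\Omega])$ is unambiguous, and up to the conformal invariance recorded in \eqref{ideldelconf} I may assume $\theta=\theta_0$ and that the given $\Omega\in[\Omega]$ is $A$-invariant, i.e. $\LL_A\Omega=0$.

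The existence of a potential amounts to the following. By \ref{Fexact} we have $H^2_\theta(M)=0$, so $\Omega=d_\theta\be$ for some real one-form $\be$; averaging over the flow of $A$ (which is $G$-like, generating a circle or torus action when $[\theta]_{dR}$ is rational, and in general using the closure of its flow) I may take $\be$ to be $A$-invariant. Splitting $\be=\be^{1,0}+\be^{0,1}$ with $\be^{0,1}=\overline{\be^{1,0}}$ and using $d_\theta=\del_\theta+\cp_\theta$ together with $\Omega^{1,1}$, the equation $\Omega=d_\theta\be$ forces $\cp_\theta\be^{0,1}=0$ and $\Omega = \del_\theta\be^{0,1}+\cp_\theta\be^{1,0}$. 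Then a potential exists precisely when the class $[\be^{0,1}]\in H^{0,1}(M,L_\theta)$ vanishes: indeed if $\be^{0,1}=\cp_\theta h$ for some complex function $h$, then replacing $\be$ by $\be-d_\theta h$ reduces to $\be^{0,1}=0$, hence $\del_\theta\cp_\theta(\Re\! \text{ of something})$... more precisely one writes $\Omega = \cp_\theta\be^{1,0}$ with $\be^{1,0}$ now of type $(1,0)$, takes the conjugate to get $\Omega = \del_\theta\overline{\be^{1,0}}$ with $\cp_\theta\overline{\be^{1,0}}=0$, and a second vanishing — of $[\overline{\be^{1,0}}]\in H^{0,1}(M,L_\theta)$, which is automatic once the first one is used symmetrically — produces $f$ with $\overline{\be^{1,0}}=\cp_\theta f$ and $\Omega = \del_\theta\cp_\theta f$, up to the factor $2i$ and a reality adjustment replacing $f$ by its real part (legitimate since $\del_\theta\cp_\theta$ of an imaginary function is imaginary while $\Omega$ is real). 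So the whole existence question is: \emph{is $H^{0,1}(M,L_\theta)=0$, or at least does the relevant class die?}

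Here is where the anti-Lee symmetry enters. The point is that $A$ (equivalently $B=JA$) acts on $\Omega^{\bullet}(M)\otimes L_\theta$ and commutes with $\cp_\theta$; since on a Vaisman manifold $B$ and $A$ are Killing and holomorphic, the operator $\LL_B$ (or a suitable combination) commutes with the Kodaira–Laplacian $\Delta_{\cp_\theta}$ built from the Vaisman metric $g_0=\Omega_0(\cdot,J\cdot)$ and the flat metric on $L_\theta$. On the $A$-invariant (equivalently $\LL_A=0$) subcomplex, one shows that $\cp_\theta$ has closed range and that harmonic representatives can be chosen $A$-invariant; but more importantly, I would exhibit a contracting homotopy or a positivity estimate showing that on $A$-invariant forms the twisted Dolbeault cohomology in bidegree $(0,1)$ vanishes — intuitively because the twisting by the non-trivial flat bundle $L_\theta$ together with the extra $S^1$-symmetry (from $A$) makes the Laplacian strictly positive there, mirroring the mechanism behind \ref{Fexact}. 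Concretely, Fact~\ref{Fexact} already gives $H^1_\theta(M)=0$; I would promote this to the Dolbeault level by decomposing the twisted de Rham Laplacian and using the Vaisman Kähler-type identities (the Lee form being parallel) to relate $\Delta_{d_\theta}$ and $\Delta_{\cp_\theta}$, concluding $H^{0,1}(M,L_\theta)=0$ outright, which kills \emph{both} obstruction classes at once and yields a potential $f$.

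For uniqueness of the $A$-invariant potential: if $f_1,f_2$ are two $A$-invariant potentials for the same $A$-invariant $\Omega$, then $\del_\theta\cp_\theta(f_1-f_2)=0$; setting $u=f_1-f_2$ one gets $\cp_\theta u\in\ker\del_\theta$, and pairing with $\del_\theta$ and using the same positivity of $\Delta_{\cp_\theta}$ on $A$-invariant functions (where it is essentially $\cp_\theta^*\cp_\theta$ plus a positive zeroth-order term coming from the $|\theta^{0,1}|^2$ contribution, strictly positive because $\theta\neq 0$) forces $\cp_\theta u=0$, i.e. $\del u=\theta^{0,1}u$; the only solution in $\ce(M)$ of such an equation on a compact manifold with $[\theta]_{dR}\neq 0$ is $u=0$ (this is exactly $H^0_\theta(M)=0$, used already for the moment-map discussion). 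Hence $f_1=f_2$.

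\textbf{Main obstacle.} The delicate point is the Hodge-theoretic vanishing $H^{0,1}(M,L_\theta)=0$ on the $A$-invariant subcomplex: one must set up the correct elliptic theory for $\cp_\theta$ on a nontrivial flat line bundle, verify that the relevant Bochner/Kähler identities on a Vaisman manifold (where $\nabla\theta=0$) produce a genuinely positive operator on the symmetry-reduced complex, and handle the case where the flow of $A$ is not periodic (so "averaging over $A$" means integrating over the closure of a one-parameter subgroup). Getting the reality bookkeeping right when passing from $\cp_\theta f$ back to the real two-form $2i\del_\theta\cp_\theta f$ is routine but must be done carefully so that $f$ can be taken real and $A$-invariant simultaneously.
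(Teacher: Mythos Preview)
Your reduction of existence to the vanishing of a class in $H^{0,1}(M,L_\theta)$ is correct and is exactly how the paper frames the problem, via the isomorphism $H^{1,1}_{BC}(M,L_\theta)\cong H^{0,1}(M,L_\theta)$ valid on Vaisman-type manifolds. The gap is in the mechanism you propose for that vanishing.

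You write that you would ``relate $\Delta_{d_\theta}$ and $\Delta_{\cp_\theta}$, concluding $H^{0,1}(M,L_\theta)=0$ outright''. This is false: on a Vaisman-type manifold $H^{0,1}(M,L_\theta)$ can be non-zero --- the paper exhibits precisely such a class on the standard Hopf surface (\ref{exampleExact}), and uses it to build an LCK metric with no potential. There are no \Ka-type identities equating the twisted de Rham and Dolbeault Laplacians here; the Vaisman metric is not \Ka, and the vanishing of $H^\bullet_\theta(M)$ from \ref{Fexact} simply does not propagate to the Dolbeault side. So the hypothesis $A\in\aut([\Omega])$ is doing real work, not merely letting you average primitives. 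You do mention the fallback of proving vanishing only on the $A$-invariant subcomplex via a ``positivity estimate'', but you do not identify what that estimate would be, and a generic Bochner argument is not available.

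The paper's actual mechanism is a specific eigenvalue computation. With respect to the Vaisman background one forms the Laplacian $\ov{\DA}_\theta=[\cp_\theta,\cp_\theta^*]$ and shows, by a short Cartan-formula calculation with the holomorphic vector fields $B^{1,0}$, $B^{0,1}$, that $\LL_{B^{0,1}}=\id$ and $\LL_{B^{1,0}}=-\id$ on $\ker\ov{\DA}_\theta$, hence $\LL_A=-2i\cdot\id$ there. Thus harmonic forms are never $A$-invariant (unless zero), and the flow $\Phi_t$ of $A$ acts on harmonics by $\e^{-2it}$. Writing $\Omega=\del_\theta\al+\cp_\theta\ov{\al}+i\del_\theta\cp_\theta f$ with $\al$ harmonic, one then integrates $\Phi_t^*\Omega$ over $t\in[0,\pi]$: the left side gives $\pi\Omega$ because $\Omega$ is $A$-invariant, while on the right the harmonic contributions vanish since $\int_0^\pi\e^{\pm 2it}dt=0$. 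Note that this averaging is over a fixed interval of length $\pi$, chosen to match the eigenvalue; averaging over the torus closure $\TT_A$ of the flow, as you suggest, would not by itself kill $\al$ --- it is only used afterwards to make the resulting potential $A$-invariant.

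Your uniqueness argument has the same gap. From $\del_\theta\cp_\theta u=0$ with $u$ real and $A$-invariant you want $\cp_\theta u=0$, invoking an unspecified ``positivity of $\Delta_{\cp_\theta}$ on $A$-invariant functions''; no such positivity is available. The paper instead identifies $\ker(i\del_\theta\cp_\theta|_{\ce(M,\RR)})\cong H^0(M,\LL_\theta)$ via a short sheaf sequence, producing $\sigma=u+iv$ with $\cp_\theta\sigma=0$ and $v=f_2-f_1$. Then $\sigma$ is $\ov{\DA}_\theta$-harmonic, so $\LL_A\sigma=-2i\sigma$, i.e. $A(u)=2v$ and $A(v)=-2u$; the $A$-invariance of $v$ forces $u=0$ and then $v=0$. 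Both existence and uniqueness thus rest on the single eigenvalue identity $\LL_A|_{\ker\ov{\DA}_\theta}=-2i\cdot\id$, which is the idea missing from your plan.
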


Recall from Section~\ref{SecDef} that an LCK metric $(\Omega,\theta)$ on $(M,J)$ is said to admit a potential if there exists a real function $f\in\ce(M,\RR)$ so that $\Omega=2i\del_{\theta}\cp_{\theta}f$. More generally, one can define the twisted Bott-Chern cohomology group:
\begin{equation}\label{BC}
H_{BC}^{1,1}(M,L_{\theta}):=\frac{\{\al\in\Omega^{1,1}(M,\RR)| d_\theta\al=0\}}{\{i\del_\theta \cp_\theta f|f\in\ce(M,\RR)\}}.
\end{equation}
Because of the relation \eqref{ideldelconf}, this cohomology group only depends on the complex structure and on de Rham class of $\theta$. As such, any LCK structure $[\Omega]$ on $(M,J)$ determines a twisted Bott-Chern cohomology class $[\Omega]_{BC}\in H_{BC}^{1,1}(M,L_{\theta})$, and by definition $[\Omega]$ admits a potential if and only if $[\Omega]_{BC}=0$.

In order to describe the twisted Bott-Chern cohomology group, recall that we have introduced in Section~\ref{SecDef} the groups $H^\bullet(M,L_\theta)$ and $H^{\bullet,\bullet}(M,L_\theta)$. Let us moreover define the following cohomology group, determined also by $[\theta]_{dR}$ and $J$:
\begin{equation*}
H^{1,1}_\RR(M,L_\theta):=\frac{\{\al\in\Omega^{1,1}(M,\RR)|d_\theta\al=0\}}{\{d_\theta\be|\be\in\Omega^1(M,\RR)\}}.
\end{equation*}
Note that $H^{1,1}_\RR(M,L_\theta)$ naturally identifies with a subgroup of $H^2(M,L_\theta)$. 

We can identify $H^1(M,L_\theta)$ with a subgroup of $H^{0,1}(M,L_\theta)$ via the injection:
\begin{equation*}
H^1(M,L_\theta)\rightarrow H^{0,1}(M,L_\theta), \ \ \al \bmod \im d_\theta\mapsto \al^{0,1} \bmod \im\cp_\theta.
\end{equation*}
Moreover, we have a morphism 
\begin{align*}
F:H^{0,1}(M,L_\theta)&\rightarrow H^{1,1}_{BC}(M,L_\theta)\\
\al\bmod\im \cp_\theta &\mapsto \Re\del_\theta\al\bmod\im(i\del_\theta\cp_\theta)
\end{align*}
which vanishes when restricted to $H^1(M,L_\theta)$, so that $F$ induces a morphism $[F]$ defined on the quotient $H^{0,1}(M,L_\theta)/H^1(M,L_\theta)$. The twisted Bott-Chern group is then described by the following exact sequence:
\begin{equation}\label{BCseq}
\xymatrix{
0\ar[r]&\frac{H^{0,1}(M,L_\theta)}{H^1(M,L_\theta)}\ar[r]^-{[F]}& H^{1,1}_{BC}(M,L_\theta)\ar[r]^-{[\id]} &H^{1,1}_\RR(M,L_\theta)\ar[r] &0.
}
\end{equation}
For the non-twisted version of this sequence, see \cite{g76}, and for the twisted one, see \cite{go14}.

Suppose now that $(M,J)$ is of Vaisman type. In this case, because $H^\bullet(M,L_\theta)=0$ by \ref{Fexact} and because of the exact sequence \eqref{BCseq}, $F$ gives rise to an isomorphism:
\begin{equation}\label{BCV}
H^{0,1}(M,L_\theta)\cong H^{1,1}_{BC}(M,L_\theta).
\end{equation}

Any LCK metric on a Vaisman type manifold $(M,J)$ is exact. However, it was observed by Goto \cite[Section~5.2]{go14} that $H^{0,1}(M,L_\theta)$ might not vanish, in which case there exist LCK metrics on $(M,J)$ which do not admit potentials. 

\begin{example}\label{exampleExact}
Consider the standard Hopf surface $H=\CC^{2}-\{0\}/_{z\sim \e z} $ with the Vaisman metric 
\begin{equation*}
\Omega_0=|z|^{-2}\sum idz_j\wedge d\ov{z}_j,\ \ \  \theta=-d\ln|z|^2.
\end{equation*} 
Let $\al=|z|^{-2}z_1^2\cp\ln|z|^2\in\ker\cp_\theta|_{\Omega^{0,1}(H,\CC)}$. Then for $K>0$ big enough, the following real $(1,1)$-form on $H$ is strictly positive:
\begin{equation*}
\Omega=K\Omega_0+\del_\theta\al+\cp_\theta\ov{\al}
\end{equation*}
defining an LCK metric $(\Omega,\theta)$ on $H$. However, it can be seen that the form $\al$ is not $\cp_\theta$-exact, so $\Omega$ admits no potential. 

Note that while $(H,\Omega_0,\theta)$ endowed with the standard $G=\TT^2$-action is a toric LCK manifold, the form $\al$ is not $G$-invariant and $([\Omega],[\theta]_{dR})$ is not a toric LCS structure.  
\end{example}

\begin{proof}[\textbf{Proof of \ref{APot}}]
Let $([\Omega],[\theta]_{dR})$ be an LCK structure on the Vaisman type manifold  $(M,J,A)$ on which $A$ acts conformally. By \ref{FLeef} and \ref{FLeev}, there exists a Vaisman metric $\Omega_0$ on $(M,J)$ with Lee form $\theta\in[\theta]_{dR}$ and with anti-Lee vector field $A$. Let $B=-JA$, and let us suppose, after eventually multiplying $\Omega_0$ and $A$ with positive constants, that $\theta(B)=|B|_{\Omega_0}^2=2$. 
 
We start by establishing some Hodge theoretical facts that we will need for our proof. For a differential operator $D$ on $M$, let us denote by $D^*$ its formal adjoint with respect to the Vaisman metric $g_0:=\Omega_0(\cdot,J\cdot)$. Let also:
\begin{equation*}
B^{0,1}=\frac{1}{2}(B+iA)\in\ce(T^{0,1}M), \ \ B^{1,0}=\frac{1}{2}(B-iA)\in\ce(T^{1,0}M)
\end{equation*}
so that $\theta^{1,0}(B^{1,0})=\theta^{0,1}(B^{0,1})=1$.  We have: 
\begin{equation*}
\cp_\theta^*=\cp^*-\iota_{B^{0,1}}.
\end{equation*}
Then the second order differential operator:
\begin{equation*}
\ov{\DA}_\theta=[\cp_\theta,\cp_\theta^*]=\cp_\theta\cp_\theta^*+\cp_\theta^*\cp_\theta
\end{equation*}
is an auto-adjoint generalized Laplacian, as its principal symbol is the same as the principal symbol of the $\cp$-Laplacian $\ov{\DA}=[\cp,\cp^*]$. Here and in what follows, for two graded operators $P$ and $Q$ on $\Omega^{\bullet}(M)$ we denote by $[P,Q]=PQ-(-1)^{\deg P\deg Q}QP$ the super-commutator of $P$ and $Q$, where $\deg$ denotes the degree of an operator.

Thus by Hodge theory, we have an $L^2$-orthogonal decomposition:
\begin{equation}\label{L2decomp}
\Omega^{\bullet,\bullet}(M)=\ker\ov{\DA}_\theta\oplus\im\cp_\theta\oplus\im\cp^*_\theta 
\end{equation} 
and an isomorphism:
\begin{equation}\label{Hodgeiso}
H^{0,1}(M,L_\theta)\cong\ker\ov{\DA}_\theta|_{\Omega^{0,1}(M)}.
\end{equation}

On the other hand, $A$ and $B$ are Killing vector fields for $g_0$ and their metric duals, $J\theta$ and $\theta$ respectively, are $d^*$-closed. This easily implies:
\begin{align*}
\LL_B^*=-\LL_B, & \ \ \LL_A^*=-\LL_A,\\
\LL^*_{B^{1,0}}=-\LL_{B^{0,1}}, & \ \  \LL^*_{B^{0,1}}=-\LL_{B^{1,0}}.
\end{align*}

As $B^{1,0}$ is a holomorphic vector field, we have $[\cp, \iota_{B^{1,0}}]=0$, hence also $[\del, \iota_{B^{0,1}}]=0$ and so:
\begin{equation*}
\LL_{B^{1,0}}=\del\iota_{B^{1,0}}+\iota_{B^{1,0}}\del, \ \ \LL_{B^{0,1}}=\cp\iota_{B^{0,1}}+\iota_{B^{0,1}}\cp.
\end{equation*}
Thus $\LL_{B^{0,1}}$ commutes with $\ov{\DA}_\theta$, implying $\LL_{B^{0,1}}(\ker\ov{\DA}_\theta)\subset\ker\ov{\DA}_\theta$. Also, as $\LL_{B^{1,0}}=-\LL_{B^{0,1}}^*$, we have $\LL_{B^{1,0}}(\ker\ov{\DA}_\theta)\subset\ker\ov{\DA}_\theta$.

Let now $\be\in\ker\ov{\DA}_\theta=\ker\cp_\theta\cap\ker\cp^*_\theta$. Then:
\begin{align*}
\LL_{B^{0,1}}\be&=\cp\iota_{B^{0,1}}\be+\iota_{B^{0,1}}\cp\be\\
&=\cp\iota_{B^{0,1}}\be+\be-\theta^{0,1}\wedge\iota_{B^{0,1}}\be
\end{align*}
which also reads:
\begin{equation*}
\LL_{B^{0,1}}\be-\be=\cp_\theta\iota_{B^{0,1}}\be.
\end{equation*}
Now the left hand side in the above equality is $\ov{\DA}_\theta$-harmonic, and thus, by \eqref{L2decomp}, $L^2$-orthogonal to the right hand side which belongs to the image of $\cp_\theta$, so both terms vanish. We infer:
\begin{equation*}
\LL_{B^{0,1}}|_{\ker\ov{\DA}_\theta}=\id, \ \  \LL_{B^{1,0}}|_{\ker\ov{\DA}_\theta}=-( \LL_{B^{0,1}}|_{\ker\ov{\DA}_\theta})^*=-\id
\end{equation*}
and therefore:
\begin{equation}\label{LLA}
\LL_A|_{\ker\ov{\DA}_\theta}=i(\LL_{B^{1,0}}-\LL_{B^{0,1}})|_{\ker\ov{\DA}_\theta}=-2i\id.
\end{equation}
If we denote by $\Phi_t$ the one-parameter group generated by $A$, then \eqref{LLA} implies:
\begin{equation}\label{PhiA}
\Phi_t^*\be=\e^{-2it}\be, \ \ \  \forall\be\in\ker\ov{\DA}_\theta .
\end{equation}

Let us now fix $\Omega\in[\Omega]$ with Lee form $\theta$. By hypothesis, there exists a smooth family of functions $f_t\in\ce(M)$ so that $\Phi_t^*\Omega=\e^{f_t}\Omega$, implying:
\begin{equation*}
\theta\wedge\Phi^*_t\Omega=\Phi^*_t\theta\wedge\Phi_t^*\Omega=d(\Phi^*_t\Omega)=(df_t+\theta)\wedge\Phi^*_t\Omega.
\end{equation*}

Let us suppose first that the complex dimension of $M$ is $n>1$. In this case, the above identity implies $\theta=df_t+\theta$, and since $M$ is compact, this implies then that $f_t=0$, i.e. $\Phi_t^*\Omega=\Omega$, $\forall t\in\RR$.

On the other hand, by \eqref{BCV} and \eqref{Hodgeiso},  there exist $f\in\ce(M,\RR)$ and $\al\in\Omega^{0,1}(M)$ with $\ov{\DA}_\theta\al=0$ so that
\begin{equation*}
\Omega=\del_\theta\al+\cp_\theta\ov{\al}+i\del_\theta\cp_\theta f.
\end{equation*}
Since for any $t\in\RR$, $\Phi_t$ is a biholomorphism of $(M,J)$ which preserves the form $\theta$, the action of $\Phi_t$ on smooth forms commutes  with the operators $\del_\theta$ and $\cp_\theta$. Thus we infer, via \eqref{PhiA}:
\begin{equation*}
\pi\cdot\Omega=\int_0^\pi\Phi_t^*\Omega dt=\del_\theta\int_0^\pi\Phi_t^*\al dt+\cp_\theta\int_0^\pi\Phi_t^*\ov{\al} dt+i\del_\theta\cp_\theta\int_0^\pi\Phi_t^*fdt=i\del_\theta\cp_\theta F
\end{equation*}
with $F=\int_0^\pi\Phi_t^*fdt$. Therefore $\Omega$ admits a potential. To see that $\Omega$ admits moreover an $A$-invariant potential, consider the closure $\TT_A$ of the one-parameter group $\Phi_t$ inside $\Aut(\Omega_0,J)$. Since the latter is a closed subgroup of the group of isometries of the fixed Vaisman metric, it is compact, and thus $\TT_A$ is a compact torus. As the group $\Phi_t$ preserves $\Omega$ and is dense in $\TT_A$, it follows that $\Omega$ is $\TT_A$-invariant. Also $\theta$ is $\TT_A$-invariant. Hence, letting $d\mu$ be the Haar measure on $\TT_A$ with $\int_{\TT_A}d\mu=1$, we have:
\begin{equation*}
\Omega=\int_{\TT_A}g^*\Omega d\mu(g)=\frac{i}{\pi}\del_\theta\cp_\theta\int_{\TT_A}g^*Fd\mu(g)
\end{equation*}
meaning that $\frac{1}{2\pi}\int_{\TT_A}g^*Fd\mu(g)$ is an $A$-invariant potential for $\Omega$.

In order to show uniqueness of $A$-invariant potentials, let us introduce the following sheaves over $M$: identify now $L_\theta$ with the sheaf of germs of real-valued smooth functions on $M$ which are $d_\theta$-closed. Let $\LL_\theta$ be the sheaf of germs of smooth complex-valued functions on $M$ which are $\cp_\theta$-closed. Finally, let $\mathcal{P}(L_\theta)$ be the sheaf of germs of smooth real-valued functions on $M$ which are $\del_\theta\cp_\theta$-closed. We have the short exact sequence of sheaves \cite[Section~5.2]{go14}:
\begin{equation*}
\xymatrix{ &0\ar[r] & L_\theta\ar[r] &\LL_\theta\ar[r] &\mathcal{P}(L_\theta)\ar[r] &0}
\end{equation*}
where the first map is simply the inclusion, while the latter consists in taking the imaginary part of a function. This then induces a long exact sequence in cohomology:
\begin{equation*}
\xymatrix{ &0\ar[r] & H^0(M,L_\theta)\ar[r] &H^0(M,\LL_\theta)\ar[r] &H^0(M,\mathcal{P}(L_\theta))\ar[r] & H^1(M,L_\theta)\ar[r] &\ldots
}
\end{equation*}

But $H^0(M,L_\theta)=0=H^1(M,L_\theta)$ because of the Vaisman hypothesis (\ref{Fexact}), hence we have an isomorphism:
\begin{equation}\label{kerdeldel}
\ker(i\del_\theta\cp_\theta|_{\ce(M,\RR)})=H^0(M,\mathcal{P}(L_\theta))\cong H^0(M,\LL_\theta).
\end{equation}

Let us now suppose that $\Omega$ admits two $A$-invariant potentials $f_1$ and $f_2$. It follows that $v:=f_2-f_1\in\ker(i\del_\theta\cp_\theta)$. By \eqref{kerdeldel}, there exists $\sigma\in\ce(M,\CC)$, $\sigma=u+iv$ with $u\in\ce(M,\RR)$, so that $\cp_\theta \sigma=0$. In particular, $\sigma\in\ker\ov{\DA}_\theta$, therefore by \eqref{LLA}, we have $A(\sigma)=-2i\sigma$, or also:
\begin{equation*}
A(u)=2v, \ \ A(v)=-2u.
\end{equation*}
From $A(v)=0$ we infer thus that $\sigma=0$, so $f_1=f_2$.

In the case $\dim_\CC M=1$, $(M,J)$ must be a torus by hypothesis,  so its canonical bundle $K_{M,J}$ is holomorphically trivial. Thus, by Serre duality we have:
\begin{equation*}
H^{0,1}(M,L_\theta)\cong H^1(M,\LL_\theta)\cong H^0(M,\LL_\theta^*)^*.
\end{equation*} 
Here we have identified $\LL_\theta$ with the complex line bundle $L_\theta\otimes\CC$ endowed with the holomorphic structure $\cp_L:=\cp-\theta^{0,1}\wedge\cdot$, and thus the first isomorphism in the above equation is simply the Dolbeault isomorphism.  Now, $\LL_\theta^*$ is not holomorphically trivial as $\theta^{0,1}$ is not $\cp$-exact. Hence, if there existed $\sigma\in H^0(M,\LL_\theta^*)$, it would vanish somewhere. On the other hand, the number of zeroes of $\sigma$ gives the first Chern class $c_1(\LL^*_\theta)\in H^2(M,\ZZ)\cong\ZZ$, which is zero as  $\LL^*_\theta$ is a flat line bundle. Therefore $\LL_\theta^*$ has no holomorphic section. By the same reason, neither does $\LL_\theta$ have any holomorphic section.  We conclude, by \eqref{BCV} and by \eqref{kerdeldel}, that any LCK metric $(\Omega,\theta)$ on $(M,J)$ has a unique potential. \end{proof}

Now we specialize to the toric context:

\begin{proof}[\textbf{Proof of \ref{toricPot}}] Let  $(M, J, [\Omega], [\theta]_{dR}, G)$ be a compact LCK manifold. By \ref{toricLCK}, there exists a $G$-invariant Vaisman metric $\Omega_0$ on $(M,J)$. Let $A$ denote its anti-Lee vector field $A$. By \cite[Lemma~4.7]{p16}, $A\in\Lg\subset\aut([\Omega])$, so \ref{APot} implies that any $G$-invariant form $\Omega\in[\Omega]$ admits a unique $G$-invariant potential $f$.

Let us now show that $f$ is positive. By \eqref{ideldelconf}, this is a conformally invariant property, so by the same reasoning as in the beginning of the above proof, we can suppose that $(\Omega,\theta)$ is chosen in the conformal class so that $\theta(B)=1$, where $B=-JA$. 

Let $\pi:\hat M\rightarrow M$ be the minimal cover of deck group $\Gamma$ and let $\pi^*\theta=d\phi$ on $\hat M$ so that
\begin{equation*}
\omega=\e^{-\phi}\pi^*\Omega=2i\del\cp \hat f=dJd\hat f, \ \ \ \hat f:=\e^{-\phi}\pi^*f.
\end{equation*} 
We will show that $\hat f$ is strictly positive on $\hat M$.

We recall, cf. Section \ref{SecClass}, that we have a vector field on $\hat M$ given by $d=d_\Lg+d_{J\Lg}\in\Lg\oplus J\Lg$  which generates the action of the deck group $\Gamma=\langle\gamma\rangle$ by $\gamma=\Phi_d^1$. Moreover,  we have $d_{J\Lg}=aB$, $a>0$ cf. \ref{theLee}. Denoting by $\nu=\Phi^a_B=\gamma\circ \Phi^{-1}_{d_\Lg}$ and using the fact that $\phi$ and $f$ are $G$-invariant, we obtain the following equivariance relations:
\begin{equation}\label{echiv}
\begin{aligned}
\nu^*\phi&=\gamma^*\phi=\phi+a\\
\nu^*\hat f&=\e^{-a-\phi}\gamma^*\pi^*f=\e^{-a}\hat f.
\end{aligned}
\end{equation}

Let us now fix $x\in\hat M$ and denote by $u:\RR\rightarrow\RR$ the function $u(t)=\hat f(\Phi_B^t(x))$. It is strictly convex, as we have, using that $\LL_A\hat f=0$:
\begin{align*}
\frac{d^2u}{dt^2}(t)&=\LL_B^2\hat f(y)\\
&=\LL_B(Jd\hat f(JB))(y)-\LL_{JB}(Jd\hat f(B))(y)\\
&=2\omega(B,JB)_{y}>0
\end{align*}
where $y=\Phi_B^t(x)$.

Let us first show that $\hat f$ has constant sign. If this is not the case, then we can  suppose that $x\in \hat M$ is so that $\hat f(x)=0$. But then we have, by  \eqref{echiv}:
\begin{equation*}
u(a)=\hat f(\nu(x))=\e^{-a}\hat f(x)=0=u(0).
\end{equation*}
It follows that $u(t)\leq 0$ $\forall t$, for otherwise $u$ would have a local maximum, which is impossible because $u$ is convex. But this then implies that $0$ is a maximal value of $u$, which is again impossible. Hence $\hat f$ has no zeroes.

Since $u$ is strictly convex, then $u$ either is  strictly monotone or has a global minimum $t_0$. But in the latter case, because of \eqref{echiv} and $a>0$, we have:
\begin{equation*}
u(t_0+a)=\e^{-a}u(t_0)<u(t_0)
\end{equation*}
which is impossible. This together with the fact that $u$ is of constant sign and convex then implies that $u>0$. In particular, $\hat f(x)=u(0)>0$, which holds for any choice of $x\in\hat M$, and this concludes the proof. \end{proof}

\end{document}